\title{The Free Tangent Structure}
\author{Poon Leung, Department of Mathematics, Macquarie University}
\newtheorem{theorem}{Theorem}[section]
\newtheorem{definition}[theorem]{Definition}
\newtheorem{proposition}[theorem]{Proposition}
\newenvironment{nota}[1][Notation]{\begin{trivlist}
\item[\hskip \labelsep {\bfseries #1}]}{\end{trivlist}}
\newtheorem{example}[theorem]{Example}
\newenvironment{remark}[1][Remark]{\begin{trivlist}
\item[\hskip \labelsep {\bfseries #1}]}{\end{trivlist}}
\newcommand{\pgfextractangle}[3]{%
\pgfmathanglebetweenpoints{\pgfpointanchor{#2}{center}}
{\pgfpointanchor{#3}{center}}
\global\let#1\pgfmathresult
}   
\newcommand{\head}[1]{\textnormal{\textbf{#1}}}
\newcommand{\csize}{0.3cm}
\newcommand{\weil}{\ensuremath{\mathbf{Weil}}\xspace}
\newcommand{\alg}{\ensuremath{\mathbf{Alg}}\xspace}
\newcommand{\augalg}{\ensuremath{\mathbf{AugAlg}}\xspace}
\newcommand{\kmod}{\ensuremath{k\textnormal{-}\mathbf{Mod}}\xspace}
\newcommand{\vect}{\ensuremath{\mathbf{Vect}}\xspace}
\newcommand{\gph}{\ensuremath{\mathbf{Gph}}\xspace}
\newcommand{\peil}{\ensuremath{\mathbf{Weil_1}}\xspace}
\newcommand{\blank}{\underline{\hspace{0.3cm}}}
\newcommand{\ra}{\rightarrow}
\newcommand{\Ra}{\Rightarrow}
\renewcommand{\epsilon}{\varepsilon}
\newcommand{\cm}{\ensuremath{\mathcal M}\xspace}
\newcommand{\cc}{\ensuremath{\mathcal C}\xspace}
\newcommand{\bn}{\ensuremath{\mathbb N}\xspace}
\newcommand{\bz}{\ensuremath{\mathbb Z}\xspace}
\newcommand{\bt}{\ensuremath{\mathbb T}\xspace}
\newcommand{\btwo}{\ensuremath{\mathbbm 2}\xspace}
\newcommand{\br}{\ensuremath{\mathbb R}\xspace}
\newcommand{\cd}{\ensuremath{\mathcal D}\xspace}
\newcommand{\ca}{\ensuremath{\mathcal A}\xspace}
\newcommand{\cg}{\ensuremath{\mathcal G}\xspace}
\newcommand{\bg}{\ensuremath{\mathbb G}\xspace}
\newcommand{\en}{\ensuremath{\super (\cm )}\xspace}
\newcommand{\pbc}{\mbox{\LARGE{$\lrcorner$}}}
\DeclareMathOperator{\ind}{Ind}
\DeclareMathOperator{\cl}{Cl}
\DeclareMathOperator{\maxx}{max}
\DeclareMathOperator{\super}{End}
\DeclareMathOperator{\spec}{Spec}
\begin{document}

\maketitle

\begin{center}
\textbf{Abstract}
\end{center}

At the heart of differential geometry is the construction of the tangent bundle of a manifold. There are various abstractions of this construction, and this paper seeks to compare two of them: Synthetic Differential Geometry (SDG) and Tangent Structures.

Tangent structure is defined via giving an underlying category \cm and a tangent functor $T$ along with a list of natural transformations satisfying a set of axioms, then detailing the behaviour of $T$ in the category \en. SDG on the other hand is defined through the use of Weil algebras.

The aim of this paper is to present a more precise relationship between the two approaches for describing tangent structures.

\tableofcontents

\section{Introduction}

The starting point for the notion of tangent structure is that given  a smooth manifold $M$, we can construct the \textit{tangent space} $TM$, which to each point $x\in M$ attaches the vector space $T_xM$ of all tangents to $M$ at $x$. The functoriality of this construction is used to capture the idea of differentiation of maps between more abstract spaces.

$T$ being a functor (moreover an endofunctor over the category under consideration) allows us to talk about \textit{tangent structure}; the ingredients required to give a notion of ``tangent space" to an arbitrary category. There is also a more specific, technical meaning of ``tangent structure" given in \cite{rosi} and \cite{cocr}.

On the other hand, synthetic differential geometry defines tangent spaces and related structures through the use of infinitesimals (given as the spectrum of corresponding Weil algebras) as in \cite{kock}. The resulting tangent functor is then representable and moreover takes the form $[D,-]$ (for a particular infinitesimal $D$).

There are, as we shall see, strong connections between these two seemingly different approaches. Furthermore, it will turn out that the tangent functor $T$ is closely related to a particular Weil algebra in a very meaningful way. We shall begin with a brief look at tangent structure, then discuss Weil algebras and some of the constructions possible. We will then demonstrate how (co)graphs surprisingly play an important role in this discussion.


More specifically, we will introduce a category we will call \peil (a full subcategory of \weil) and detail a process for constructing any morphism of this category using a collection of generating morphisms (through the use of graphs). We will conclude by showing \textbf{Theorem \ref{bigtheorem}}, that to give a tangent structure (in the sense of \cite{cocr}) over a category $\cm$ is to give a functor
\begin{displaymath}
F\colon \peil\ra [\cm ,\cm ]
\end{displaymath}
satisfying certain axioms.

One final observation we will make is that we can in fact remove the requirement of the codomain of $F$ needing to be an endofunctor category $[\cm ,\cm ]$, and instead replace it with an arbitrary monoidal category $(\cg,\square,\textbf{1})$. This then more clearly exhibits \peil as what one might call the ``initial" tangent structure.

\section{Tangent Structure}\label{ts}

Tangent structure is defined by Rosick\'y \cite{rosi} using (internal) bundles of abelian groups, but we will be following the more general definition of Cockett-Cruttwell \cite{cocr} using (internal) bundles of commutative monoids. More explicitly, this requires that the tangent bundle $TM$ sitting over a smooth manifold $M$ is a commutative monoid, referred to as an \textit{additive bundle}.

\subsection{Internal commutative monoid}

\begin{definition} Given a category \cc, a {\em commutative monoid} in \cc consists of
\begin{quote}
\item[$\bullet$] An object $C$ such that finite powers of $C$ exist (the terminal object we shall call $t$);
\item[$\bullet$] A pair of maps $\eta \colon t\ra C$ and $\mu \colon C\times C\ra C$ such that the following diagrams commute
\vspace{-0.1cm}
\small{
\begin{equation*}
\hspace{-2cm} \xymatrix@C+1em@R+1em{
C\times (C\times C) \ar[r]^\alpha \ar[d]_{1\times\mu} & (C\times C)\times C \ar[r]^-{\mu\times 1} & C\times C \ar[d]^\mu & t\times C \ar[r]^{\eta\times 1} \ar[dr]_{\cong} & C\times C \ar[d]^\mu & C\times t \ar[l]_{1\times\eta} \ar[dl]^{\cong} \\
C\times C \ar[rr]_\mu & & C & & C & 
}   
\end{equation*}
}
and $\mu$ agrees with the symmetry map 
\begin{displaymath}
s\colon C\times C\ra C\times C~,
\end{displaymath}
so that the diagram
\small{
\begin{equation*}
\xymatrix@C+1em@R+1em{
C\times C \ar[r]^{s} \ar[dr]_\mu & C\times C \ar[d]^\mu \\
& C
}
\end{equation*}
}
also commutes.
\end{quote}

\end{definition}

\begin{remark}
Often, commutative monoids are considered in categories with all finite products, but we shall not be assuming this.
\end{remark}

\subsection{Additive bundles}

\begin{definition} If $A$ is an object in a category \cm, then an {\em additive bundle over $A$} is a commutative monoid in the slice category $\cm/A$. Explicitly, this consists of
\begin{quote}
\item[$\bullet$] a map $p\colon X\ra A$ such that pullback powers of $p$ exist, the $n^{th}$ pullback power denoted by $X^{(n)}$ and projections $\pi_i\colon X^{(n)}\ra X$ for $i\in \{1,\dots,n\}$;
\item[$\bullet$] maps $+\colon X^{(2)}\ra X$ and $\eta\colon A\ra X$ with $p\circ += p\circ\pi_1=p\circ\pi_2$ and $p\circ \eta=id$ which are associative, commutative, and unital.
\end{quote}
\end{definition}

\begin{remark}
We will note here that the notation used in \cite{cocr} for the $n^{th}$ pullback power is instead $X_n$.
\end{remark}

\begin{definition} Suppose $p\colon X\ra A$ and $q\colon Y\ra B$ are additive bundles. An {\em additive bundle morphism} is a pair of maps $f\colon X\ra Y$ and $g\colon A\ra B$ such that the following diagrams commute.
\begin{equation*}
\xymatrix@C+1em@R+1em{
X \ar[r]^-{f} \ar[d]_-{p} & Y \ar[d]^-{q} & X^{(2)} \ar[r]^-{f\times f} \ar[d]_-{+} & Y^{(2)} \ar[d]^-{+} & A \ar[r]^-{g} \ar[d]_-{\eta} & B \ar[d]^-{\eta'} \\
A \ar[r]_-{g} & B & X\ar[r]_-{f} & Y & X\ar[r]_-{f} & Y
}   
\end{equation*}
\end{definition}

\subsection{Tangent structure (in the sense of \cite{cocr})}

\begin{definition}
Given a category \cm, a {\em tangent structure} $\mathbb{T}=(T,p,\eta,+,l,c)$ consists of
\begin{quote}
\item[$\bullet$] (\textbf{tangent functor}) a functor $T\colon \cm\ra \cm$ and a natural transformation $p\colon T\ra 1_{\cm}$ such that pullback powers $T^{(n)}$ of $p$ exist and the composites $T^m$ of $T$ preserve these pullbacks for all $m\in\mathbb{N}$;
\item[$\bullet$] (\textbf{tangent bundle}) natural transformations $+\colon T^{(2)}\Ra T$ and $\eta\colon 1_{\cm}\Ra T$ making $p\colon T\ra 1_{\cm}$ into an additive bundle;
\item[$\bullet$] (\textbf{vertical lift}) a natural transformation $l\colon T\Ra T^2$ such that
\begin{displaymath}
(l,\eta)\colon (p,+,\eta)\ra\big{(}Tp,T+,T\eta\big{)}
\end{displaymath}
is an additive bundle morphism;
\item[$\bullet$] (\textbf{canonical flip}) a natural transformation $c\colon T^2\ra T^2$ such that
\begin{displaymath}
(c,id_T)\colon \big{(}Tp,T+,T\eta\big{)}\ra\big{(}pT,+T,\eta T\big{)}
\end{displaymath}
is an additive bundle morphism;
\end{quote}
where the natural transformations $l$ and $c$ satisfy
\begin{quote}
\item[$\bullet$] (\textbf{coherence of $l$ and $c$}) $c^2=id$, $c\circ l=l$, and the following diagrams commute
\begin{equation*}
\xymatrix@C+1em@R+1em{
T \ar[r]^{l} \ar[d]_{l} & T^2 \ar[d]^{Tl} & T^3 \ar[r]^{Tc} \ar[d]_{cT} & T^3 \ar[r]^{cT} & T^3 \ar[d]^{Tc} & T^2 \ar[r]^{lT} \ar[d]_{c} & T^3 \ar[r]^{Tc} & T^3 \ar[d]^{cT} \\
T^2 \ar[r]_{lT} & T^3 & T^3 \ar[r]_{Tc} & T^3 \ar[r]_{cT} & T^3 & T^2 \ar[rr]_{Tl} & & T^3
}   \vspace{-1.05cm}
\end{equation*}
\hspace{14.5cm};
\vspace{0.5cm}
\item[$\bullet$] (\textbf{universality of vertical lift}) the following is an equaliser diagram
\begin{equation*}
\xymatrix@C+1em@R+1em{
T^{(2)} \ar[rr]^{ (T+) \circ (l \times_T \eta T)} & & T^2 \ar@<-.5ex>[rr]_{\eta\circ p\circ Tp} \ar@<.5ex>[rr]^{Tp} & & T~,
}
\end{equation*}

where $(T+) \circ (l \times_T \eta T)$ is the composite
\begin{equation*}
\xymatrix@C+1em@R+1em{
& T \ar[r]^l & T^2 & \\
T^{(2)}\ar[ur]^{\pi_1} \ar[dr]_{\pi_2} \ar@{-->}[rr] & & TT^{(2)} \ar[u]^{\pi_1} \ar[d]_{\pi_2} \ar[r]^{T+} & T^2 \\
& T\ar[r]_{\eta T} & T^2 &
}
\end{equation*}
\end{quote}
\end{definition}

\begin{remark}
We will note here that $l:T\ra T^2$ and $p:T\ra 1_\cm$ do not form a comonad. However, there is a canonical way to make $T$ a monad (detailed in \cite{cocr}).
\end{remark}

\subsection{Algebra preliminaries} \textcolor{white}{all work and no play makes jack a dull boy}

With this definition of tangent structure in mind, we will introduce the idea of Weil algebras. For the purposes of this paper, we will be using commutative, unital algebras defined over a commutative rig $k$; recall a \textit{rig} is a commutative monoid equipped with a (unital) multiplication.

In particular, if we take $k$ to be (the commutative ring) $\mathbb{Z}$, we will ultimately recover the abelian group bundles of \cite{rosi}, while (the commutative rig) $\mathbb{N}$ corresponds to the additive bundles of \cite{cocr}. Later on in our discussion, we will be interested in the rig $\btwo =\{ 0,1\}$, where multiplication is the usual one and addition is given by $\maxx$, in particular $1+1=1$.

\section{Weil Algebras}

\subsection{Definition and basic concepts} \textcolor{white}{all work and no play makes jack a dull boy}

\begin{definition}
A {\em Weil algebra} $B$ is an augmented algebra whose underlying $k$-module is finitely generated and free, for which all elements of the augmentation ideal are nilpotent.
\end{definition}

\begin{remark}
If $k$ is a field, then a Weil algebra is simply a finite dimensional local $k$-algebra with residue field $k$.
\end{remark}

A morphism between Weil algebras $B$ and $C$ is simply an augmented algebra homomorphism, i.e. an algebra map

\begin{displaymath}
f\colon B \ra C
\end{displaymath}
that is compatible with the augmentations, i.e. we have a commuting diagram
\begin{equation*}
  \xymatrix@C+1em@R+1em{
   B \ar[r]^-{f} \ar[d]_-{\epsilon_B} & C \ar[dl]^-{\epsilon_C}  \\
   k
  }   
 \end{equation*}

From here onwards, these augmented algebra homomorphisms will simply be referred to as maps.

We can now consider the category \weil with objects the Weil algebras and morphisms the augmented algebra homomorphisms so that it is a full subcategory of  \augalg$(=$\ensuremath{\mathbf{Alg}/k}\xspace, the category of augmented algebras).

We will note at this point that we will (soon) further restrict \weil to a full subcategory \peil in order to discuss tangent structure.

It is often convenient to give a Weil algebra $B$ via a presentation
\begin{displaymath}
 B=k[b_1,\dots,b_m]/Q_B ,
\end{displaymath}
where $Q_B$ is a list of polynomial terms in the generators $b_1,\dots,b_m$ by which to quotient out (i.e. terms that equal 0).

\begin{remark} All Weil algebras have such a presentation (finding it is another issue).
\end{remark}

\begin{example}\textcolor{white}{daddy needs to express some rage}

\begin{itemize}
\item[$\bullet$] $k[x]/x^2$ is the Weil algebra with $\lbrace 1,x\rbrace$ as a basis for the underlying $k$-module and equipped with the obvious multiplication, but with $x^2$ identified as 0.
\item[$\bullet$] $k[x]/x^3$ is the Weil algebra with $\lbrace 1,x,x^2\rbrace$ as a basis for the underlying $k$-module and equipped with the obvious multiplication, but with $x^3$ identified with 0.
\item[$\bullet$] $k[x,y]/x^2,y^2$ is the Weil algebra with $\lbrace 1,x,y,xy\rbrace$ as a basis for the underlying $k$-module and equipped with the obvious multiplication, but with $x^2$ and $y^2$ each identified with 0.
\end{itemize}
\end{example}

We also note the following:
\begin{itemize}
\item[$\bullet$] We shall always use presentations for which the augmentation $\epsilon\colon B \ra k$ sends each generator $b_i$ to 0.
\item[$\bullet$] Recall that for a linear map $h\colon X \ra Y$ between vector spaces, it suffices to define how $h$ acts on basis elements of $V$. Analogously, for an augmented algebra homomorphism $f\colon B \ra C$, it suffices to define how $f$ acts on generators (then check that it is suitably compatible with the relations).
\item[$\bullet$] For Weil algebras $A=k[a_1,\dots,a_m]/Q_A$ and $B=k[b_1,\dots,b_n]/Q_B$ and a map $f\colon A\ra B$, $f(a_i)$ is a polynomial in the generators $b_1,\dots,b_n$ with no constant term.
\end{itemize}

\subsection{Constructions with Weil algebras}\label{weil facts 1} \textcolor{white}{\tiny{all work and no play makes jack a dull boy}}

The results we will describe in \textbf{3.2} and \textbf{3.3} are true for well-behaved $k$, in particular if it is a field, $\bz$, $\bn$ or $\btwo$. We begin with the following facts:
\begin{itemize}
\item[$\bullet$] The category \augalg has all limits and colimits.
\item[$\bullet$] The forgetful functor $U\colon\augalg\ra\kmod$ preserves connected limits.
\item[$\bullet$] Coproducts in \augalg are given by $\otimes$.
\end{itemize}

We will first establish some facts about \weil in order to define the class of ``foundational pullbacks".

\subsubsection{Products}

\begin{proposition}
The category \weil is closed under finite products.
\end{proposition}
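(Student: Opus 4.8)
The plan is to reduce finite products in \weil to pullbacks in \augalg, and then to check that the two defining conditions of a Weil algebra are inherited. First I would dispose of the empty product: the terminal object of $\augalg=\alg/k$ is $\mathrm{id}_k\colon k\ra k$, whose underlying module is free of rank one and whose augmentation ideal is $0$, so $k$ is a Weil algebra. By induction it then suffices to treat the binary product of two Weil algebras $A$ and $B$ with augmentations $\epsilon_A,\epsilon_B$. Since $\augalg$ is the slice category $\alg/k$, the product of $A$ and $B$ there is computed as the pullback $A\times_k B$ of the cospan $A\xrightarrow{\epsilon_A}k\xleftarrow{\epsilon_B}B$ taken in \alg, which exists because $\augalg$ has all limits; concretely it is the subalgebra $\{(a,b)\in A\times B:\epsilon_A(a)=\epsilon_B(b)\}$ with componentwise operations and augmentation $(a,b)\mapsto\epsilon_A(a)$.

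Next I would identify the underlying module. A pullback is a connected limit, so the forgetful functor $U\colon\augalg\ra\kmod$ sends $A\times_k B$ to the corresponding pullback $U(A)\times_{U(k)}U(B)$ in \kmod. To see this is finitely generated and free, I would use that a Weil algebra presented with $\epsilon(b_i)=0$ (a presentation that exists by the earlier remark) has a $k$-module basis of monomials in the generators, exactly one of which, the empty monomial, is the unit. Hence $A\cong k\cdot 1_A\oplus I_A$ and $B\cong k\cdot 1_B\oplus I_B$ as $k$-modules, with the augmentation ideals $I_A,I_B$ free on the non-unit basis monomials. Matching augmentations forces the scalar components to agree, giving $U(A)\times_{U(k)}U(B)\cong k\oplus I_A\oplus I_B$, which is finitely generated and free of rank $\mathrm{rank}(A)+\mathrm{rank}(B)-1$.

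Finally, the augmentation ideal of $A\times_k B$ is $\{(x,y):x\in I_A,\ y\in I_B\}$, and since multiplication is componentwise we have $(x,y)^N=(x^N,y^N)$; choosing $N$ above the nilpotency degrees of $x$ in $A$ and of $y$ in $B$ kills both coordinates, so every element of the augmentation ideal is nilpotent. Together with the previous paragraph this shows $A\times_k B$ is a Weil algebra, and induction finishes the finite case.

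The step I expect to be the real obstacle is the freeness of the underlying module. Over a rig $k$ such as \bn or \btwo there are no additive inverses, so I cannot split $A\cong k\oplus I_A$ by the usual ``subtract $\epsilon_A(a)\,1_A$'' trick, and a pullback of free modules need not be free in general. The monomial-basis description of Weil algebras is precisely what lets me exhibit the splitting and an honest free basis; once that is in hand, everything else (slice products as pullbacks, preservation by $U$, and the nilpotency computation) is formal.
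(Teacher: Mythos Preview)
Your proposal is correct and follows essentially the same route as the paper: identify the nullary product as $k$, compute the binary product as the pullback $A\times_k B$ of the augmentations (product in $\augalg$, pullback in $\kmod$), and verify that the result is again finitely generated free with nilpotent augmentation ideal. The paper's proof is considerably terser---it simply asserts that freeness, finite generation, and nilpotency pass to $A\times_k B$---whereas you supply the actual computation via the splitting $A\cong k\oplus I_A$ and the componentwise nilpotency check; your caveat about the rig case is well taken and matches the paper's own restriction to ``well-behaved $k$'' stated at the start of \S\ref{weil facts 1}.
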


\begin{proof}
Since $k$ is a zero object, then it is the nullary product.

For arbitrary Weil algebras $A$ and $B$, begin by taking the pullback
\begin{equation*}
\xymatrix{
\ar @{} [dr] | \pbc A\times_k B \ar[r] \ar[d] & B\ar[d]^{\epsilon_b} \\
A \ar[r]_{\epsilon_A} & k
} 
\end{equation*}
in \kmod (or equivalently, the product in \augalg). Since both $A$ and $B$ are finitely generated, free and have nilpotent augmentation ideals, then the same is true of $A\times_k B$. Thus it is also a Weil algebra.
\end{proof}

\begin{proposition}
The forgetful functor $U\colon \weil\ra \kmod$ preserves and reflects pullbacks.
\end{proposition}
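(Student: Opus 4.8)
The plan is to leverage the two facts already recorded, namely that \augalg\ is complete and that $U\colon\augalg\ra\kmod$ preserves connected limits. Since the shape of a pullback is the connected cospan $\bullet\ra\bullet\leftarrow\bullet$, a pullback is a connected limit, so $U\colon\augalg\ra\kmod$ already preserves pullbacks. The whole problem is therefore to transfer this statement between the full subcategory \weil\ and its ambient category \augalg, and the key structural input will be that the \augalg-pullback of a cospan of Weil algebras is again a Weil algebra.

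For \textbf{preservation}, I would start with a square that is a pullback in \weil, form the pullback $Q$ of the same cospan $A\xrightarrow{\alpha}C\xleftarrow{\beta}B$ in \augalg\ (which exists by completeness), and show $Q$ is a Weil algebra. Since $U$ preserves connected limits, $U(Q)$ is the \kmod-pullback $\{(a,b):\alpha(a)=\beta(b)\}\subseteq U(A)\times U(B)$. Because all maps are augmented, comparing augmentation components splits this as $U(Q)\cong k\oplus(I_A\times_{I_C}I_B)$, where $I_A,I_B,I_C$ are the finitely generated free augmentation ideals; its augmentation ideal is $I_A\times_{I_C}I_B$, which is nilpotent since $(x,y)^n=(x^n,y^n)$ vanishes once $n$ exceeds the (uniform) nilpotency degrees of $I_A$ and $I_B$. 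Granting that this module is finitely generated and free, $Q$ is a Weil algebra; fullness of \weil\ in \augalg\ then makes $Q$ the pullback in \weil\ as well, so it agrees with the given apex, and applying $U$ returns the \kmod-pullback.

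For \textbf{reflection}, suppose a square in \weil\ becomes a pullback in \kmod\ under $U$; I would verify the \weil-universal property directly rather than passing through $Q$. Given a Weil algebra $W$ with a compatible cone $u\colon W\ra A$ and $v\colon W\ra B$, the \kmod-pullback yields a unique $k$-linear $h\colon U(W)\ra U(P)$ over $u$ and $v$. The point is that $h$ is automatically an augmented algebra map: the two projections out of the pullback $U(P)$ are jointly monic, and both $U(f)\circ h=U(u)$ and $U(g)\circ h=U(v)$ are algebra maps, so pushing $h(ww')$ and $h(w)h(w')$ along the jointly monic pair forces them to agree, and the same comparison handles units and augmentations. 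Faithfulness of $U$ then supplies uniqueness in \weil, establishing the pullback.

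I expect the main obstacle to be the freeness claim buried in the preservation step. For $k$ a field a submodule of a finite-dimensional space is again free, and for $k=\bz$ a finitely generated submodule of a free module over a principal ideal domain is free, but for the rigs $\bn$ and $\btwo$ submodules of free modules need not be free (for instance a three-element chain sits inside $\btwo^2$ yet is not a free \btwo-module). Showing that the specific module $I_A\times_{I_C}I_B$ arising from \emph{augmented} maps is nonetheless finitely generated and free is the delicate point, and is exactly where the hypothesis that $k$ be well-behaved must be used; everything else is formal, flowing from connected-limit preservation, the joint monicity of pullback projections, and fullness of \weil\ in \augalg.
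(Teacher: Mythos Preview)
Your reflection argument is sound and matches the paper's in spirit: the paper simply notes that $U'\colon\augalg\ra\kmod$ preserves and reflects pullbacks, and that fullness of \weil\ in \augalg\ then gives reflection immediately. Your joint-monicity verification is a correct unpacking of this.

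For preservation, however, the paper takes a genuinely different route that sidesteps the obstacle you rightly flag. Rather than trying to show that the \augalg-pullback $Q$ of a cospan of Weil algebras is itself a Weil algebra, the paper observes that the family $\{k[x]/x^n\mid n\in\bn\}$ is a strong generator for \augalg\ and lies inside \weil. Given a pullback $P$ in \weil\ and the \augalg-pullback $Q$ of the same cospan, the comparison map $P\ra Q$ induces bijections $\augalg(k[x]/x^n,P)\cong\augalg(k[x]/x^n,Q)$ for every $n$: any cone from $k[x]/x^n$ lives in \weil\ by fullness, and $P$ is already universal among those. Strong generators detect isomorphisms, so $P\cong Q$ in \augalg, and preservation by $U'$ finishes the argument.

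This bypasses entirely the question of whether $I_A\times_{I_C}I_B$ is finitely generated and free, which, as you correctly diagnose, is delicate over $\bn$ and $\btwo$ and is not something you have established. Your approach would need a separate freeness verification for each ``well-behaved'' $k$, and without one there is a genuine gap; the paper's strong-generator argument requires no such case analysis and works uniformly.
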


\begin{proof}
The category \augalg has all pullbacks, and these are preserved and reflected by the forgetful functor
\begin{displaymath}
U'\colon \augalg\ra\kmod
\end{displaymath}

Since \weil is a full subcategory of \augalg, then reflection of pullbacks follows immediately.

As for preservation of pullbacks, note that 
\begin{displaymath}
\{ k[x]/x^n~|~n\in\bn\}
\end{displaymath}
forms a strong generator for \augalg and also lies in \weil. It follows that the forgetful functor $U\colon \weil\ra\kmod$ preserves and reflects pullbacks.
\end{proof}

\subsubsection{Coproducts}

\begin{proposition} The category $\weil$ has all finite coproducts.
\end{proposition}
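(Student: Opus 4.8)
The plan is to lean on the facts recorded just above: the category \augalg\ has all colimits, and coproducts in \augalg\ are computed by the tensor product $\otimes_k$. Since \weil\ is a \emph{full} subcategory of \augalg, it suffices to show that the \augalg-coproduct of finitely many Weil algebras is again a Weil algebra. Fullness then hands us the universal property for free: any cocone into a Weil algebra $C$ is automatically a cocone in \augalg, and the induced comparison map is a map between Weil algebras, hence a morphism of \weil. So the whole problem reduces to closure of \weil\ under the \augalg-coproduct.

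For the nullary coproduct there is nothing to prove: the initial object of \augalg\ is $k$ itself, which is a Weil algebra. By associativity of coproducts it then remains only to treat the binary case, i.e.\ to show that if $A$ and $B$ are Weil algebras then so is $A\otimes_k B$. I would check the two defining properties in turn. For the underlying module, write $\{1=a_0,a_1,\dots,a_m\}$ and $\{1=b_0,b_1,\dots,b_n\}$ for $k$-bases of $A$ and $B$, chosen (as is our convention) so that the augmentations send every basis element of positive index to $0$. Then $\{a_i\otimes b_j\}$ is a $k$-basis of $A\otimes_k B$, so its underlying $k$-module is again free and finitely generated. The augmentation of $A\otimes_k B$ is $\epsilon_A\otimes\epsilon_B$ followed by $k\otimes_k k\ra k$, and its augmentation ideal $I$ is spanned by those $a_i\otimes b_j$ with $(i,j)\neq(0,0)$; that is, $I = I_A\otimes_k B + A\otimes_k I_B$, where $I_A,I_B$ denote the augmentation ideals of $A$ and $B$.

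The one step with any content is showing that $I$ is nilpotent. I would first upgrade the element-wise nilpotency in the definition to nilpotency of the ideals $I_A,I_B$: each is generated (as an ideal) by the finitely many positive-degree basis elements, every generator is nilpotent, and a product of sufficiently many generators of a commutative (r)ig must repeat one generator to a vanishing power, so $I_A^N=0$ and $I_B^M=0$ for some $N,M$. Working in $A\otimes_k B$ and using $(I_A\otimes_k B)^i = I_A^i\otimes_k B$, $(A\otimes_k I_B)^j = A\otimes_k I_B^j$ together with $B\cdot B=B$ and $I_A^i\cdot A = I_A^i$, a direct computation gives
\begin{displaymath}
I^{\,t} \;=\; \sum_{i+j=t} I_A^{\,i}\otimes_k I_B^{\,j}.
\end{displaymath}
Taking $t=N+M-1$, every summand has either $i\geq N$ or $j\geq M$ by pigeonhole (otherwise $i+j\leq N+M-2$), so each term vanishes and $I^{\,N+M-1}=0$. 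In particular every element of the augmentation ideal is nilpotent, so $A\otimes_k B$ is a Weil algebra.

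The main obstacle, such as it is, is exactly this last passage: moving from the element-wise nilpotency granted by the definition to genuine nilpotency of the augmentation ideal of the tensor product. Everything else is bookkeeping inherited from \augalg\ via the fullness of the inclusion, and the computation of $I^{\,t}$ depends only on the multiplicative identities $B\cdot B=B$, $A\cdot A=A$, which hold equally well over the rigs $\bn$ and $\btwo$ of interest, so no extra hypotheses on $k$ are needed beyond those already assumed.
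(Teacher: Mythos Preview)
Your proposal is correct and follows essentially the same route as the paper: reduce to closure of \weil\ in \augalg\ under $\otimes$, observe that the tensor of finitely generated free modules is again finitely generated and free, and then verify nilpotency of the augmentation ideal. The paper's proof is terser---it dismisses the nilpotency step as ``almost immediate''---whereas you spell out the pigeonhole argument for $I_A^N=0$ and the binomial expansion of $I^{\,t}$ in full; but the strategy is identical.
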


\begin{proof}
(Finite) coproducts in \alg are given by $\otimes$. Colimits in \augalg are as in \alg. It remains to show that \weil is closed in \augalg under $\otimes$. Clearly, since Weil algebras are finitely generated and free, then a finite coproduct of Weil algebras will also be finitely generated and free. The nilpotency of the augmentation ideal is almost immediate.
\end{proof}

\begin{remark}
let $A$ and $B$ be two arbitrary Weil algebras with presentations
\begin{align*}
A &=k[a_1,\dots,a_m]/Q_A \\
B &=k[b_1,\dots,b_n]/Q_B
\end{align*}

It can be readily shown that
\begin{quote}
\item[$\bullet$] The product $A\times B$ has presentation
\begin{displaymath}
A\times B = k[a_1,\dots,a_m,b_1,\dots,b_n]/ Q_A \cup Q_B \cup [ a_i b_j|\forall i,j ]
\end{displaymath}
\item[$\bullet$] The coproduct $A\otimes B$ has presentation
\begin{displaymath}
A\otimes B = k[ a_1,\dots,a_m,b_1,\dots,b_n]/ Q_A \cup Q_B
\end{displaymath}
\end{quote}
\end{remark}

Finally, let us define $W$ to be the Weil algebra $k[ x] /x^2$ (we will use this notation from here onwards), then the n$^{th}$ power and copower of $W$, denoted $W^n$ and $nW$ respectively, have representations
\begin{align*}
W^n &= k[ x_1,\dots,x_n] / \lbrace {x_i}{x_j}|\hspace{0.1cm}\forall i\leqslant j\rbrace \\
nW &= k[ x_1,\dots,x_n] / \lbrace {x_i}^2|\hspace{0.1cm}\forall i\rbrace
\end{align*}

\subsection{Further properties of Weil algebras}\label{weil facts 2} \textcolor{white}{\tiny{all work and no play makes jack a dull boy}}

We shall now detail some properties of the constructions above that will be important later on.

\begin{proposition}
For a finitely generated and free $k$-module $V$, the functor
\begin{displaymath}
V\otimes\blank\colon \kmod\ra\kmod
\end{displaymath}
preserves pullbacks.
\end{proposition}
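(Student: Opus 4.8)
The plan is to reduce the statement to a fact about finite powers. Since $V$ is finitely generated and free, a choice of basis gives an isomorphism $V\cong k^n$ for some $n\in\bn$, and this induces a natural isomorphism of endofunctors $V\otimes\blank\cong k^n\otimes\blank$ on \kmod. Because a functor naturally isomorphic to a pullback-preserving functor is itself pullback-preserving, it suffices to treat the case $V=k^n$.

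First I would identify $k^n\otimes\blank$ with the $n$-fold power functor. Using the unit isomorphism $k\otimes M\cong M$ together with the distributivity of $\otimes$ over finite direct sums, one obtains an isomorphism $k^n\otimes M\cong M^n$, natural in $M$, where $M^n$ denotes the $n$-fold product (equivalently the $n$-fold coproduct, since $n$ is finite). Confirming naturality in $M$ is the one point deserving genuine care, and it is exactly here that finite generation of $V$ is used: an infinite basis would instead produce an infinite coproduct, for which the argument below does not apply verbatim.

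It then remains to show that the $n$-fold power functor $(\blank)^n\colon\kmod\ra\kmod$ preserves pullbacks, which I would deduce from the stronger fact that it preserves all limits. Writing $(\blank)^n$ as the composite of the diagonal $\Delta\colon\kmod\ra(\kmod)^n$ with the product functor $\prod\colon(\kmod)^n\ra\kmod$, both factors are right adjoints ($\Delta$ to the coproduct functor, and $\prod$ to $\Delta$), so the composite preserves limits and in particular pullbacks. Equivalently, and more concretely, one can verify directly that the canonical comparison $(A\times_C B)^n\ra A^n\times_{C^n}B^n$ is an isomorphism, since an $n$-tuple of $f,g$-compatible pairs is precisely an $f^n,g^n$-compatible pair of $n$-tuples. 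As all the ingredients are standard (tensoring with $k$ is the identity, $\otimes$ distributes over finite coproducts, and limits commute with limits), I expect no serious obstacle beyond the naturality check noted above, which legitimises the reduction to the power functor at the level of functors rather than merely objectwise.
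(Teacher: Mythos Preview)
Your proof is correct and follows essentially the same route as the paper: reduce to $V\cong k^n$, identify $V\otimes\blank$ with the $n$-fold power functor $(\blank)^n$, and observe that this preserves all limits. The paper's version is terser, simply asserting that $(\blank)^n$ preserves all limits, whereas you supply the justification via right adjoints and flag the naturality check explicitly; but the argument is the same.
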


\begin{proof}
Since $V$ is finite dimensional, let $\dim V =n$. Then $V\cong k^n$ and thus
\begin{displaymath}
V\otimes\blank\cong (\blank)^n\colon \kmod\ra\kmod,
\end{displaymath}
 and this functor preserves all limits.
\end{proof}

\begin{proposition}
Given any Weil algebra $A$, the functor 
\begin{displaymath}
A\otimes\underline{\hspace{0.3cm}}\colon \weil\ra\weil
\end{displaymath}
preserves any existing pullbacks (in particular, pullbacks over k).

\end{proposition}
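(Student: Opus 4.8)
The plan is to reduce the claim to the preceding proposition about the module-level functor $U(A)\otimes\blank$ by passing through the forgetful functor $U\colon \weil\ra\kmod$, which we have already shown preserves and reflects pullbacks. The whole argument rests on a single compatibility: for any Weil algebra $X$, the underlying $k$-module of the coproduct $A\otimes X$ is the module tensor product $U(A)\otimes_k U(X)$, and this identification is natural in $X$. This is immediate from the construction of the tensor product of (augmented) algebras, whose underlying module is by definition the module tensor product, and whose functoriality in $X$ acts on underlying modules as $\mathrm{id}_{U(A)}\otimes U(\blank)$.

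First I would fix a pullback square in \weil, say $P=B\times_D C$ with the evident projections to $B$ and $C$ over a cospan $B\ra D\leftarrow C$, and form its image under the functor $A\otimes\blank$. Since $A\otimes\blank$ is a functor and \weil is closed under $\otimes$ (so the image objects are again Weil algebras), this image is a commutative square in \weil over the cospan $A\otimes B\ra A\otimes D\leftarrow A\otimes C$; it remains only to verify that it is a pullback.

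Next I would apply $U$ to this image square. Using the natural identification $U(A\otimes X)\cong U(A)\otimes_k U(X)$, the resulting square in \kmod is precisely the functor $U(A)\otimes\blank$ applied to the square $U(P)\ra U(B)$, $U(P)\ra U(C)$, $U(B)\ra U(D)$, $U(C)\ra U(D)$. Because the original square was a pullback in \weil and $U$ preserves pullbacks, this latter square is a pullback in \kmod. Since $A$ is a Weil algebra, $U(A)$ is finitely generated and free, so the preceding proposition tells us $U(A)\otimes\blank$ preserves that pullback. Hence the $U$-image of our original square is a pullback in \kmod.

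Finally, since $U$ reflects pullbacks, the square $A\otimes P\ra A\otimes B\leftarrow A\otimes C$ must already be a pullback in \weil, which is exactly the assertion that $A\otimes\blank$ preserves the given pullback. The only step requiring genuine care is the naturality of $U(A\otimes X)\cong U(A)\otimes_k U(X)$: one must check that the coproduct (algebra tensor) and the module tensor genuinely coincide on underlying modules, and coherently in the variable $X$, so that the two ``$\otimes$'' functors line up and the module-level proposition can legitimately be invoked. Everything else is a formal consequence of the preservation and reflection properties of $U$ established earlier.
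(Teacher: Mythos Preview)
Your argument is correct and follows essentially the same route as the paper: the paper packages the same reasoning into a commuting square in $\mathbf{Cat}$ expressing $U\circ(A\otimes\blank)\cong (U(A)\otimes\blank)\circ U$, and then invokes exactly the preservation/reflection properties of $U$ together with the preceding proposition about $U(A)\otimes\blank$ on $k$-modules. Your version simply spells out more carefully the naturality of $U(A\otimes X)\cong U(A)\otimes_k U(X)$ that makes that square commute.
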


\begin{proof}
Note the commuting diagram in \textbf{Cat}
\begin{equation*}
  \xymatrix@C+2em@R+2em{
   \weil \ar[r]^-{A\otimes\underline{\hspace{0.3cm}}} \ar[d]_-{U} & \weil \ar[d]^-{U}  \\
   \vect \ar[r]_-{A\otimes\underline{\hspace{0.3cm}}} & \vect
  }   
\end{equation*}

Then by starting with a pullback diagram in the top left instance of \weil\ and applying the previous facts about the forgetful functor $U$ and the $A\otimes\blank\colon \vect\ra\kmod$, then the result is immediate.
\end{proof}

\begin{remark} Since $\otimes$ is commutative, then for any two Weil algebras $A$ and $B$, the functor $A\otimes\underline{\hspace{0.3cm}}\otimes B$ also preserves pullbacks.
\end{remark}

As such, for Weil algebras $A$, $B$ and $C$, we will refer to precisely all pullbacks of the form
\begin{equation*}
\xymatrix{
\ar @{} [dr] | \pbc A\otimes (B\times C) \ar[r]^-{A\otimes \pi_B} \ar[d]_{A\otimes \pi_C} & A\otimes B \ar[d]^{A\otimes\epsilon_B} \\
A\otimes C \ar[r]_{A\otimes\epsilon_C} & A
} 
\end{equation*}
as ``foundational pullbacks".

\begin{remark}
For the case of $k$ being a field, $k$-modules are vector spaces and are automatically free, and \weil in fact has all connected limits.
\end{remark}

\subsection{Tangent structure and Weil algebras}\textcolor{white}{\tiny{all work and no play makes jack a dull boy}}

The tangent functor $T$ is closely related to the Weil algebra $W=k\lbrace x\rbrace /x^2$. In synthetic differential geometry (i.e. in the sense of \cite{kock}), $T$ is the representable functor $(\underline{\hspace{0.3cm}})^D$, where $D=\spec (W)$.

Here, we will begin to describe a different relationship between \weil and tangent structure. Regard coproduct $\otimes$ as a monoidal operation on \weil (with unit $k$).

\begin{proposition}
The (endo)functor
\begin{displaymath}
W\otimes\underline{\hspace{0.3cm}}\colon\weil\ra\weil
\end{displaymath}
can be used to define a tangent structure on \weil.
\end{proposition}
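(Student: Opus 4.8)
The plan is to obtain the entire tangent structure by transporting a handful of maps of Weil algebras across the functor $F\colon\weil\ra[\weil,\weil]$ sending $A\mapsto A\otimes\blank$ and $\phi\mapsto\phi\otimes\blank$. This $F$ is covariant and strong monoidal from $(\otimes,k)$ to $(\circ,1_\weil)$, so $F(k)=1_\weil$, $F(W)=T$ and $F(nW)=T^n$ with $nW=W^{\otimes n}$. First I would identify the pullback powers. Since $k$ is a zero object the product $W^n=W\times\cdots\times W$ is a pullback over $k$, hence a foundational pullback, and every $B\otimes\blank$ preserves it; applying this pointwise gives $T^{(n)}M\cong W^n\otimes M$ naturally, i.e. $T^{(n)}=F(W^n)$. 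In particular the pullback powers exist, and since each $B\otimes\blank$ preserves foundational pullbacks, every composite $T^m=F(mW)$ preserves them, as the tangent-functor axiom demands.

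Next I would define the six structural transformations as $F$ applied to augmented-algebra maps; because $F$ is covariant, a map $\phi\colon A\ra B$ yields $F(\phi)\colon F(A)\Ra F(B)$ in the same direction. Writing $x$ for the generator of $W$, $x_1,x_2$ for the generators of $2W=W\otimes W$ (outer and inner factor), and $y_1,y_2$ for those of $W^2=W\times W$, I take $p=F(\epsilon)$ from the augmentation $\epsilon\colon W\ra k$; $\eta=F(u)$ from the unit $u\colon k\ra W$; $+=F(\sigma)$ from $\sigma\colon W^2\ra W$, $\,y_1,y_2\mapsto x$; $\,l=F(\lambda)$ from $\lambda\colon W\ra 2W$, $\,x\mapsto x_1x_2$; and $c=F(\tau)$ from the factor-swap $\tau\colon 2W\ra 2W$. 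Each is a genuine augmented-algebra map, the only point being compatibility with the relations, e.g. $\lambda(x)^2=(x_1x_2)^2=0$ and $\sigma(y_1y_2)=x^2=0$; for later use one also records the derived maps $Tp=\mathrm{id}_W\otimes\epsilon$ (so $x_1\mapsto x,\ x_2\mapsto 0$) and $pT=\epsilon\otimes\mathrm{id}_W$ (so $x_1\mapsto 0,\ x_2\mapsto x$).

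Verifying the axioms then reduces to checking equalities of Weil-algebra maps on generators: since evaluating a transformation $F(\phi)$ at $k$ returns $\phi$, the functor $F$ is faithful, so each axiom is equivalent to the corresponding identity of algebra maps. The commutative-monoid axioms for $(+,\eta)$ follow from the symmetry of $\sigma$ and a direct associativity check on $W^3\ra W$; the additive-bundle-morphism conditions for $(l,\eta)$ and $(c,\mathrm{id}_T)$ reduce to identities such as $(\epsilon\otimes\mathrm{id})\circ\tau=\mathrm{id}\otimes\epsilon$; and the coherence conditions become $\tau^2=\mathrm{id}$ and $\tau\circ\lambda=\lambda$ (both send $x\mapsto x_1x_2$), together with the relations on $3W$ expressing that the two routes to $x_1x_2x_3$ agree and that the swaps obey the symmetric-group relations. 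These are all routine squarefree-monomial computations.

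The one substantial step is the universality of the vertical lift, which I expect to be the main obstacle. Here I would first reduce the equaliser in $[\weil,\weil]$ to a single equaliser in $\weil$: limits in a functor category are pointwise, and at an object $M$ the three terms are $(\,\cdot\,)\otimes M$ applied to $W^2,2W,W$; since every Weil algebra $M$ is free, hence flat over $k$, and the forgetful functor to $\kmod$ detects this connected limit (as it does pullbacks), $M\otimes\blank$ preserves the equaliser. It therefore suffices to show that
\[ W^2 \xrightarrow{\ \psi\ } 2W \rightrightarrows W \]
is an equaliser in $\weil$, where the parallel pair is $Tp=\mathrm{id}_W\otimes\epsilon$ and $\eta\circ p\circ Tp=u\circ\epsilon$, and $\psi$ is the algebra map underlying $(T+)\circ(l \times_T \eta T)$. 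Unwinding the pairing $l \times_T \eta T$ into $TT^{(2)}$ and applying $T+$ is the most delicate bookkeeping, but the outcome is $\psi\colon y_1\mapsto x_1x_2,\ y_2\mapsto x_2$. Now $Tp-u\epsilon$ detects exactly the $x_1$-coefficient, so the equaliser is the subspace $\mathrm{span}\{1,x_2,x_1x_2\}\subseteq 2W$; since $\psi$ is linear and injective with precisely this image, it is the equaliser inclusion, and the construction of the tangent structure on $\weil$ is complete.
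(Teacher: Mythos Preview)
Your proof is correct and follows essentially the same approach as the paper: both transport the structural maps $\epsilon,\eta,+,l,c$ along $A\mapsto A\otimes\blank$ and reduce the axioms to identities of augmented-algebra maps, with the universality of the vertical lift coming down to the single equaliser $W^2\to 2W\rightrightarrows W$. You are simply more explicit where the paper says ``routine exercise'': you spell out the faithfulness of $F$ (evaluation at $k$), the pointwise reduction of the equaliser via freeness of each $M$, and the identification of the equaliser subspace $\mathrm{span}\{1,x_2,x_1x_2\}$ with the image of $\psi$. One small notational slip: writing ``$\eta\circ p\circ Tp=u\circ\epsilon$'' is off, since the latter is a map $W\to W$ rather than $2W\to W$; the correct description is $\eta\circ(\epsilon\otimes\epsilon)$, which you in effect use in the subsequent computation.
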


\begin{proof}
With $T=W\otimes\underline{\hspace{0.3cm}}$, we first give the natural transformations required in order to have a tangent structure on \weil. The names for the morphisms used below will be deliberately chosen to coincide with those of tangent structure.
\vspace{0.1cm}
\begin{center}
\begin{tabular}{ccc}
\hline
& \head{Natural transformation} & \head{Explanation}\\
\hline
Projection & $\epsilon_W\otimes\underline{\hspace{0.3cm}}\colon T\Ra id_\weil$ & $\epsilon_W\colon W\ra k$ is the augmentation for $W$\\
Addition & $+\otimes\underline{\hspace{0.3cm}}\colon T^{(2)} \Ra T$ & $T^{(2)}$ is the functor $W^2\otimes\underline{\hspace{0.3cm}}$,\\
& & $+\colon W^2\ra W$; $x_1,x_2\mapsto x$\\
Unit & $\eta\otimes\underline{\hspace{0.3cm}}\colon id_\weil\Ra T$ & $\eta\colon k\ra W$ is the (multiplicative) unit for $W$\\
Vertical lift & $l\otimes\underline{\hspace{0.3cm}}\colon T\Ra T^2$ & $T^2=T\circ T$ is the functor $2W\otimes\underline{\hspace{0.3cm}}$\\
& & $l\colon W\ra 2W$; $x\mapsto x_1x_2$\\
Canonical flip & $c\otimes\underline{\hspace{0.3cm}}\colon T^2\Ra T^2$ & $c\colon 2W\ra 2W$; $x_i\mapsto x_{3-i}$, for $i=1,2$\\
\hline
\end{tabular}
\end{center}
With these choices of natural transformations as well as the facts established in \textbf{3.2} and \textbf{3.3}, it is a very routine exercise to verify that this does in fact define a tangent structure on \weil; in particular, this uses the fact that

\begin{equation*}
  \xymatrix@C+1em@R+1em{
W^2 \ar[rrr]^{(W\otimes +)\circ (l\times_W (\eta_W \otimes W))} & & & 2W \ar@<-.5ex>[rr]_{\eta_W\circ(\epsilon_W\otimes\epsilon_W)} \ar@<.5ex>[rr]^{W\otimes\epsilon_W} & & W
}
\end{equation*}
is an equaliser in \peil. Here, the map $(W\otimes +)\circ (l\times_W (\eta_W \otimes W))$, which we will denote as $v$, is given by
\begin{align*}
k[x_1,x_2]/x_1^2,x_2^2,x_1x_2 &\ra k[y_1,y_2]/y_1^2,y_2^2\\
x_1&\mapsto y_1y_2 \\
x_2&\mapsto y_2~.
\end{align*}
The map $W\otimes\epsilon_W:k[y_1,y_2]/y_1^2,y_2^2\ra k[z]/z^2$ sends $y_1$ to $z$ and $y_2$ to $0$, and $\eta_W\circ(\epsilon_W\otimes\epsilon_W):k[y_1,y_2]/y_1^2,y_2^2\ra k[z]/z^2$ sends both $y_1$ and $y_2$ to $0$.

\end{proof}
It is important to note that this tangent structure on \weil arises from the object $W$, its (finite product) powers $W^n$ and tensors of these. With this in mind, it makes sense to take a full subcategory \peil of \weil whose objects are given by the closure of the set $\{W^n\}_{n\in \bn}$ under finite tensors.

We shall first discuss this subcategory \peil at the level of objects and give a convenient way to classify them. After that, we shall show that the morphisms of this subcategory can be constructed from those described in the table above.

Given the presentations for products and coproducts described in \textbf{3.2.2}, then clearly any object of \peil will have a presentation of the form
\begin{displaymath}
C=k\lbrace c_1,...,c_n\rbrace / \lbrace c_ic_j \hspace{0.1cm} | \hspace{0.1cm} c_i\sim c_j \hspace{0.1cm} \rbrace
\end{displaymath}
for some symmetric reflexive relation $\sim$ (although not all such symmetric reflexive relations will yield an object of \peil). We will suppress the reflexive property of such relations.

However, a symmetric reflexive relation (in particular on a finite set) can be represented as a graph. More importantly, the use of graphs allows us to say explicitly which Weil algebras we wish to include in the subcategory \peil, and furthermore will later allow us to describe the morphisms of \peil.

\section{Graphs}

\subsection{Graph fundamentals} \textcolor{white}{all work and no play makes jack a dull boy}

Let us begin by defining some basic concepts relating to (finite simple) graphs that we will need to use.

\begin{definition}
A {\em graph} $G$ is a pair of sets $(V,E)$, with $V$ a finite set of ``vertices" of $G$, and $E$ a set of unordered pairs of distinct vertices, called the ``edges" of $G$.
\end{definition}

\begin{example}
$G=\Big{(}\lbrace 1,2,3,4,5,6\rbrace,\lbrace (1,2),(1,3),(1,6),(2,3),(4,5)\rbrace\Big{)}$ is the graph
\begin{center}
\begin{tikzpicture}
\coordinate (X1) at (-0.2,0) {};
\draw (X1) +(-7pt,-7pt) rectangle +(6.5pt,8.5pt);
\node at (X1) {\textnormal{1}};

\coordinate (X2) at (0.7,1.5) {};
\draw (X2) +(-7pt,-7pt) rectangle +(6.5pt,8.5pt);
\node at (X2) {\textnormal{2}};

\coordinate (X3) at (2.3,1.5) {};
\draw (X3) +(-7pt,-7pt) rectangle +(6.5pt,8.5pt);
\node at (X3) {\textnormal{3}};

\coordinate (X4) at (3.2,0) {};
\draw (X4) +(-7pt,-7pt) rectangle +(6.5pt,8.5pt);
\node at (X4) {\textnormal{4}};

\coordinate (X5) at (2.3,-1.5) {};
\draw (X5) +(-7pt,-7pt) rectangle +(6.5pt,8.5pt);
\node at (X5) {\textnormal{5}};

\coordinate (X6) at (0.7,-1.5) {};
\draw (X6) +(-7pt,-7pt) rectangle +(6.5pt,8.5pt);
\node at (X6) {\textnormal{6}};

\draw[-,>=triangle 45] ($(X1)!\csize!(X2)$) -- ($(X2)!\csize!(X1)$);
\draw[-,>=triangle 45] ($(X1)!\csize!(X3)$) -- ($(X3)!\csize!(X1)$);
\draw[-,>=triangle 45] ($(X2)!\csize!(X3)$) -- ($(X3)!\csize!(X2)$);
\draw[-,>=triangle 45] ($(X1)!\csize!(X6)$) -- ($(X6)!\csize!(X1)$);
\draw[-,>=triangle 45] ($(X4)!\csize!(X5)$) -- ($(X5)!\csize!(X4)$);
\end{tikzpicture}
\end{center}
\end{example}

\begin{remark} For the purposes of our calculations, we will only need simple finite graphs, so this is what we will mean when we say ``graph".
\end{remark}

For a non-empty graph $G=(V,E)$, we will say it is \textit{connected} if for any two distinct vertices $u$ and $v$, there exist $v_1,\dots,v_s$ with $(v_i,v_{i+1})\in E$ for each $i$, with $v_1=u$ and $v_s=v$. Furthermore, we will say that $G$ is \textit{discrete} if the edge set $E$ is empty.

\begin{remark} Notice that under this convention, the one-point graph is regarded as being both connected and discrete whilst the empty graph is neither.
\end{remark}

\begin{definition}
Given a graph $G=(V,E)$, the {\em complement} of $G$ is the graph $G^C=(V,E^C)$, where for any two distinct $u,v\in V$,
\begin{displaymath}
(u,v)\in E\Leftrightarrow (u,v)\notin E^C .
\end{displaymath}
\end{definition}

We now define two important binary operations on graphs. Let $G_1=(V_1,E_1)$ and $G_2=(V_2,E_2)$ be two graphs; without loss of generality, assume that $V_1$ and $V_2$ are disjoint sets, hence $E_1$ and $E_2$ are also disjoint.

\begin{definition}
The {\em disjoint union} of $G_1$ and $G_2$, denoted as $G_1\otimes G_2$, is the graph
\begin{displaymath}
G_1\otimes G_2=(V_1\cup V_2,E_1\cup E_2)
\end{displaymath}
\end{definition}

Or, put simply, it is the graph given by simply placing $G_1$ adjacent to $G_2$ without adding or removing any edges.

\begin{definition}
The {\em graph join} of $G_1$ and $G_2$, denoted $G_1\times G_2$, is the graph
\begin{displaymath}
G_1\times G_2=(V_1\cup V_2,\widetilde{E})
\end{displaymath}
where $\widetilde{E}=E_1\cup E_2\cup (V_1\times V_2)$. 
\end{definition}

Or, put simply, it is the graph given by taking $G_1\otimes G_2$, then adding in an edge from each vertex in $G_1$ to each vertex in $G_2$. Equivalently, it can be defined as
\begin{displaymath}
G_1\times G_2=(G_1^C\otimes G_2^C)^C
\end{displaymath}

\begin{remark} The use of $\otimes$ and $\times$ to denote the operations of disjoint union and graph join respectively do not coincide with the notation used in graph theory. Graph union is often denoted as $G_1 \cup G_2$. Further, the graph join, sometimes called ``graph sum", is denoted $G_1+G_2$, (although the meaning of ``graph sum" can also vary depending on the literature). However, the notation $\lbrace \otimes,\times\rbrace$ was chosen in place of $\lbrace\cup,+\rbrace$ for consistency with the notation for Weil algebras.
\end{remark}

\begin{definition} An {\em independent set} $U$ of $G$ is a (possibly empty) subset of $V$ for which no two distinct vertices in $U$ have an edge between them (or equivalently, the full subgraph of $G$ induced by $U$ is discrete).
\end{definition}

\begin{definition} Conversely, a {\em clique} $U'$ of $G$ is a (possibly empty) subset  of $V$ for which any two distinct vertices in $U'$ have an edge between them (or equivalently, the full subgraph of $G^C$ induced by $U'$ is discrete).
\end{definition}

\begin{remark} Given a graph $G$, an independent set $U$ of $G$ is also a clique of $G^C$.
\end{remark}

We can actually use the notion of cliques and independent sets to form new graphs from existing ones.

\begin{definition}\label{igra}
Given a graph $G=(V,E)$, define $\ind(G)$ to be the graph given by:
\begin{quote}

\item[$\bullet$] Vertices: the independent sets of $G$;
\item[$\bullet$] Edges: given any two distinct independent sets $U_1$ and $U_2$ of $G$, there is an edge between them in $\ind(G)$ when $\exists ~x\in U_1,~y\in U_2$ such that either there is an edge between $x$ and $y$ in $G$ or $x=y$ (i.e. $U_1\cap U_2 \neq \phi$).
\end{quote}
\end{definition}

\begin{definition}\label{cgra}
Given a graph $G=(V,E)$, define $\cl(G)$ to be the graph given by:

\begin{quote}
\item[$\bullet$] Vertices: the cliques of $G$;
\item[$\bullet$] Edges: given any two distinct cliques $U_1$ and $U_2$ of $G$, there is an edge between them in $\cl(G)$ whenever their union $U_1\cup U_2$ is also a clique of $G$ (note that there is no requirement for $U_1$ and $U_2$ to be disjoint).
\end{quote}
\end{definition}

\begin{definition}
Given a graph $G=(V,E)$, define $\ind_+(G)$ to be the full subgraph of $\ind(G)$ where the vertices are the non-empty independent sets of $G$.
\end{definition}

These will become useful later in our description of morphisms of Weil algebras.

\subsection{Graphs and Weil algebras} \textcolor{white}{all work and no play makes jack a dull boy}

Each graph $G$ determines a Weil algebra (which we shall refer to as $k[G]$, which we will note is \textit{not} a group algebra in any way) with generators the vertices of $G$, with $uv=0$ whenever $(u,v)$ is an edge of $G$ and with $v^2=0$ for each vertex $v$.

Moreover, it is precisely Weil algebras of the form
\begin{displaymath}
C=k[c_1,\dots,c_n]/ \Big{\lbrace} c_ic_j \hspace{0.1cm} \Big{|} \hspace{0.1cm} c_i\backsim c_j \hspace{0.1cm} \Big{\rbrace},
\end{displaymath}
for a symmetric, reflexive relation $\backsim$, that arise in this way for a unique graph $G$. Conversely, given such a Weil algebra $C$, we can define $G_C$ to be the graph with vertex set $\{c_1,\dots,c_n\}$ and an edge $(c_i,c_j)$ whenever $c_ic_j=0$ for all $i\neq j$.

For example, we have

\begin{center}
\begin{tabular}{ccc}
  \hline
  \head{Weil algebra} & \head{Presentation} & \head{Graph}  \\
  \hline \\
  \vspace{0.2cm}
  $k$ & $k[~]$ &  \\
  \vspace{0.5cm}
  $W$ & $k[x]/x^2$ &
\begin{tikzpicture}[scale=0.75,transform shape]
\coordinate (X1) at (0,0) {};
\draw (X1) +(-7pt,-7pt) rectangle +(6.5pt,8.5pt);
\node at (X1) {1};
\end{tikzpicture}
\\
\vspace{0.5cm}
$2W$ & $k[x_1,x_2]/x_1^2,x_2^2$ &
\begin{tikzpicture}[scale=0.75,transform shape]
\coordinate (X1) at (0,0) {};
\draw (X1) +(-7pt,-7pt) rectangle +(6.5pt,8.5pt);
\node at (X1) {1};

\coordinate (X2) at (2,0) {};
\draw (X2) +(-7pt,-7pt) rectangle +(6.5pt,8.5pt);
\node at (X2) {2};
\end{tikzpicture}
\\
\vspace{0.5cm}
$W^2$ & $k[x_1,x_2]/x_1^2,x_2^2,x_1x_2$ &
\begin{tikzpicture}[scale=0.75,transform shape]
\coordinate (X1) at (0,0) {};
\draw (X1) +(-7pt,-7pt) rectangle +(6.5pt,8.5pt);
\node at (X1) {1};

\coordinate (X2) at (2,0) {};
\draw (X2) +(-7pt,-7pt) rectangle +(6.5pt,8.5pt);
\node at (X2) {2};

\draw[-,>=triangle 45] ($(X1)!\csize!(X2)$) -- ($(X2)!\csize!(X1)$);
\end{tikzpicture}
\\
\vspace{0.5cm}
$3W$ & $k[x_1,x_2,x_3]/x_1^2,x_2^2,x_3^2$ &
\begin{tikzpicture}[scale=0.75,transform shape]
\coordinate (X1) at (1,0.7) {};
\draw (X1) +(-7pt,-7pt) rectangle +(6.5pt,8.5pt);
\node at (X1) {1};

\coordinate (X2) at (0,-0.7) {};
\draw (X2) +(-7pt,-7pt) rectangle +(6.5pt,8.5pt);
\node at (X2) {2};

\coordinate (X3) at (2,-0.7) {};
\draw (X3) +(-7pt,-7pt) rectangle +(6.5pt,8.5pt);
\node at (X3) {3};
\end{tikzpicture}
\\
\vspace{0.5cm}
$W\times 2W$ & $k[x_1,x_2,x_3]/x_1^2,x_2^2,x_3^2,x_1x_2,x_1x_3$ &
\begin{tikzpicture}[scale=0.75,transform shape]
\coordinate (X1) at (1,0.7) {};
\draw (X1) +(-7pt,-7pt) rectangle +(6.5pt,8.5pt);
\node at (X1) {1};

\coordinate (X2) at (0,-0.7) {};
\draw (X2) +(-7pt,-7pt) rectangle +(6.5pt,8.5pt);
\node at (X2) {2};

\coordinate (X3) at (2,-0.7) {};
\draw (X3) +(-7pt,-7pt) rectangle +(6.5pt,8.5pt);
\node at (X3) {3};

\draw[-,>=triangle 45] ($(X1)!\csize!(X2)$) -- ($(X2)!\csize!(X1)$);
\draw[-,>=triangle 45] ($(X1)!\csize!(X3)$) -- ($(X3)!\csize!(X1)$);
\end{tikzpicture}
\\
\vspace{0.5cm}
$W^2\otimes W$ & $k[x_1,x_2,x_3]/x_1^2,x_2^2,x_3^2,x_1x_2$ &
\begin{tikzpicture}[scale=0.75,transform shape]
\coordinate (X1) at (1,0.7) {};
\draw (X1) +(-7pt,-7pt) rectangle +(6.5pt,8.5pt);
\node at (X1) {1};

\coordinate (X2) at (0,-0.7) {};
\draw (X2) +(-7pt,-7pt) rectangle +(6.5pt,8.5pt);
\node at (X2) {2};

\coordinate (X3) at (2,-0.7) {};
\draw (X3) +(-7pt,-7pt) rectangle +(6.5pt,8.5pt);
\node at (X3) {3};

\draw[-,>=triangle 45] ($(X1)!\csize!(X2)$) -- ($(X2)!\csize!(X1)$);
\end{tikzpicture}
\\
\vspace{0.5cm}
$W^3$ & $k[x_1,x_2,x_3]/x_1^2,x_2^2,x_3^2,x_1x_2,x_1x_3, x_2x_3$ &
\begin{tikzpicture}[scale=0.75,transform shape]
\coordinate (X1) at (1,0.7) {};
\draw (X1) +(-7pt,-7pt) rectangle +(6.5pt,8.5pt);
\node at (X1) {1};

\coordinate (X2) at (0,-0.7) {};
\draw (X2) +(-7pt,-7pt) rectangle +(6.5pt,8.5pt);
\node at (X2) {2};

\coordinate (X3) at (2,-0.7) {};
\draw (X3) +(-7pt,-7pt) rectangle +(6.5pt,8.5pt);
\node at (X3) {3};

\draw[-,>=triangle 45] ($(X1)!\csize!(X2)$) -- ($(X2)!\csize!(X1)$);
\draw[-,>=triangle 45] ($(X1)!\csize!(X3)$) -- ($(X3)!\csize!(X1)$);
\draw[-,>=triangle 45] ($(X2)!\csize!(X3)$) -- ($(X3)!\csize!(X2)$);
\end{tikzpicture}
\\
  \hline
\end{tabular}
\end{center}

Then for Weil algebras that can be described by such symmetric relations, the product of these Weil algebras is equivalent to taking the graph join and the coproduct is equivalent to the disjoint union of the corresponding graphs. 

More explicitly, for graphs $G$ and $H$, we have
\begin{align*}
k[G]\otimes k[H] &= k[G\otimes H]\\
k[G]\times k[H] &= k[G\times H]
\end{align*}

Then, to say we want only those Weil algebras that can be constructed from iterations of product and coproduct of $W$ is to say we want only those Weil algebras induced by graphs which can be constructed from iterations of graph join and disjoint union of the one-point graph (note that the empty graph is the identity for both graph join and disjoint union, and that the Weil algebra $k$ is the identity for product and coproduct).

Such graphs have been extensively studied and are known as \textit{cographs} (complement reducible graphs) and various characterisations have been given; see \cite{cogr}, for example.

So now we know exactly which Weil algebras we want to include in the subcategory \peil: all those that correspond to cographs.

\begin{remark} From here onwards, we shall only refer to those Weil algebras that correspond to cographs.
\end{remark}

\begin{remark}
For a (co)graph $G$ and the corresponding Weil algebra $k[G]$, a non-empty independent set of $G$ (i.e. a vertex of $\ind_+(G)$) is precisely a non-zero monomial term in the generators of $k[G]$.
\end{remark}

\section{The morphisms of \peil}

We said earlier that \peil was to be a full subcategory of \weil, and further mentioned that any morphism of \peil can be constructed from the ingredients of tangent structure. More specifically, we will also need composition, as well as the operations of $\times$ and $\otimes$ of these maps and the universal property of certain pullbacks (recall that products in \weil can be regarded as pullbacks of the augmentations).

It will turn out that we do not require the universal property of coproduct, but rather we shall consider \peil as a monoidal category with respect to $\otimes$ (and $k$ the unit for this operation).

Now recall that, ultimately, the tangent bundles will have the structure of $k$-modules for whatever the choice of $k$ may be; \br results in the bundles being real vector spaces, \bz yields abelian group bundles and \bn results in additive (commutative monoid) bundles. Although we are using the additive bundles of \cite{cocr} as motivation, it is more convenient to start by taking $k$ to be the rig $\{ 0,1\}$, where multiplication is the usual one and addition is given by $\maxx$, in particular $1+1=1$.

\subsection{Expressing maps using graphs}\label{maps as graphs} \textcolor{white}{all work and no play makes jack a dull boy}

Recall that to define a map between Weil algebras, it suffices to define how the map acts on each of the generators. So, consider two arbitrary Weil algebras
\begin{displaymath}
A=k[ a_1,\dots,a_n] /Q_A \textnormal{ and } B=k[ b_1,\dots,b_m] /Q_B
\end{displaymath}
of \peil (with corresponding (co)graphs $G_A$ and $G_B$) and an arbitrary map $f\colon A\ra B$.

Then for each generator $a_i$ of $A$, we can express $f(a_i)$ as a summation
\begin{displaymath}
f(a_i)=\sum\limits_{U\in \ind_+(G_B)} \alpha_U^i b_U~,
\end{displaymath}
summing over the vertices $U$ of $\ind_+(G_B)$ (recall these were the non-zero monomials in the generators of $B$), $\alpha_U^i\in k$ is a constant (taking value 0 or 1) and $b_U$ is the monomial corresponding to the independent set $U$.

Thus $f(a_i)$ is specified by choosing which of the $U$ have $\alpha_U^i=1$ (although there are some conditions that have to be satisfied, we will discuss these conditions later). We may then represent this information pictorially by circling these $U$'s on the graph $G_B$.

For example, consider the map $f\colon W\ra 3W$ given by $x\mapsto y_1y_2 + y_1y_3$. We can represent this in graph form:

\begin{center}
\begin{tikzpicture}[xscale=1, yscale=1]
\newcommand{\elipseWidth}{0.7cm}

\coordinate (E1L) at (1.5cm,1.75cm) {};
\coordinate (E1R) at (-0.5cm,-1.25cm) {};

\coordinate (E2L) at (0.5cm,1.75cm) {};
\coordinate (E2R) at (2.5cm,-1.25cm) {};

\coordinate (E3L) at (-1cm,-0.5cm) {};
\coordinate (E3R) at (3cm,-0.5cm) {};

\pgfextractangle{\angle}{E1L}{E1R}
\draw[rotate around={\angle:($0.5*(E1L) + 0.5*(E1R)$)}, black, thick] let
\p1 = (E1L),
\p2 = (E1R),
\n1 = {veclen((\x2-\x1),(\y2-\y1))/2},
\n2 = {veclen((\elipseWidth),(0))}
in
($0.5*(E1L) + 0.5*(E1R)$) ellipse (\n1 and \n2);

\pgfextractangle{\angle}{E2L}{E2R}
\draw[rotate around={\angle:($0.5*(E2L) + 0.5*(E2R)$)}, black, thick] let
\p1 = (E2L),
\p2 = (E2R),
\n1 = {veclen((\x2-\x1),(\y2-\y1))/2},
\n2 = {veclen((\elipseWidth),(0))}
in
($0.5*(E2L) + 0.5*(E2R)$) ellipse (\n1 and \n2);

\coordinate (X1) at (1cm,1cm) {};
\draw (X1) +(-7pt,-7pt) rectangle +(6.5pt,8.5pt);
\node at (X1) {1};

\coordinate (X2) at (0cm,-0.5cm);
\draw (X2) +(-7pt,-7pt) rectangle +(6.5pt,8.5pt);
\node at (X2) {2};

\coordinate (X3) at (2cm,-0.5cm);
\draw (X3) +(-7pt,-7pt) rectangle +(6.5pt,8.5pt);
\node at (X3) {3};
\end{tikzpicture}
\end{center}
where each term of $f(x)$ is represented by circling the vertices that generate the term (so the term $y_1y_2$ is represented by the ellipse encompassing the vertices $1$ and $2$).

\begin{nota} For a graph $G$, let a \textit{circle} $U$ of $G$ simply mean an independent set of $G$, but regarded pictorially as some shape encompassing the relevant vertices.
\end{nota}

So, for a general map $f\colon A\ra B$, start by taking the generator $a_1$. Then take the graph $G_B$ for $B$, and for each $U$ with $\alpha_U^1=1$, we add onto $G_B$ a circle corresponding to $U$, and we do this for all $U$ with $\alpha_U^1=1$. Then repeat this process for each generator $a_i$, but (say) using different colours for different generators.

\begin{example} The map $f\colon 2W\ra 3W$ given by $x_1\mapsto y_1y_2+y_2y_3$ and $x_2\mapsto y_1+y_1y_3$ may be represented as
\begin{center}
\begin{tikzpicture}[xscale=1, yscale=1]
\newcommand{\elipseWidth}{0.7cm}

\coordinate (E1L) at (1.5cm,1.75cm) {};
\coordinate (E1R) at (-0.5cm,-1.25cm) {};

\coordinate (E2L) at (0.5cm,1.75cm) {};
\coordinate (E2R) at (2.5cm,-1.25cm) {};

\coordinate (E3L) at (-1cm,-0.5cm) {};
\coordinate (E3R) at (3cm,-0.5cm) {};

\coordinate (E4L) at (0.5cm,1cm) {};
\coordinate (E4R) at (1.5cm,1cm) {};

\draw[Blue, thick] (1cm,1cm) circle (0.7cm);

\pgfextractangle{\angle}{E1L}{E1R}
\draw[rotate around={\angle:($0.5*(E1L) + 0.5*(E1R)$)}, Red, thick] let
\p1 = (E1L),
\p2 = (E1R),
\n1 = {veclen((\x2-\x1),(\y2-\y1))/2},
\n2 = {veclen((\elipseWidth),(0))}
in
($0.5*(E1L) + 0.5*(E1R)$) ellipse (\n1 and \n2);

\pgfextractangle{\angle}{E2L}{E2R}
\draw[rotate around={\angle:($0.5*(E2L) + 0.5*(E2R)$)}, Blue, thick] let
\p1 = (E2L),
\p2 = (E2R),
\n1 = {veclen((\x2-\x1),(\y2-\y1))/2},
\n2 = {veclen((\elipseWidth),(0))}
in
($0.5*(E2L) + 0.5*(E2R)$) ellipse (\n1 and \n2);

\pgfextractangle{\angle}{E3L}{E3R}
\draw[rotate around={\angle:($0.5*(E2L) + 0.5*(E2R)$)}, Red, thick] let
\p1 = (E3L),
\p2 = (E3R),
\n1 = {veclen((\x2-\x1),(\y2-\y1))/2},
\n2 = {veclen((\elipseWidth),(0))}
in
($0.5*(E3L) + 0.5*(E3R)$) ellipse (\n1 and \n2);

\coordinate (X1) at (1cm,1cm) {};
\draw (X1) +(-7pt,-7pt) rectangle +(6.5pt,8.5pt);
\node at (X1) {\textnormal{1}};

\coordinate (X2) at (0cm,-0.5cm);
\draw (X2) +(-7pt,-7pt) rectangle +(6.5pt,8.5pt);
\node at (X2) {\textnormal{2}};

\coordinate (X3) at (2cm,-0.5cm);
\draw (X3) +(-7pt,-7pt) rectangle +(6.5pt,8.5pt);
\node at (X3) {\textnormal{3}};
\end{tikzpicture}
\end{center}
where $f(x_1)$ is represented in red and $f(x_2)$ is represented in blue.
\end{example}

\begin{nota} For a map $f\colon A\ra B$, let $\lbrace U\rbrace_f$ denote the graph $G_B$ together with a set $\big{\lbrace} (U,i)~|~\forall~\alpha_U^i=1\big{\rbrace}$, all of this regarded pictorially as a set of coloured circles on $G_B$.
\end{nota}

\begin{nota} Two circles $(U_1,i_1)$ and $(U_2,i_2)$ are \textit{distinct} if either $i_1\neq i_2$ (the circles are of different colours, but we may have $U_1=U_2$) or if $U_1\neq U_2$.
\end{nota}

\begin{remark} For a map $f\colon W\ra B$, then the circles $(U,i)$ of $\{ U\}_f$ will simply be referred to as $U$ (i.e. we omit the index $i$).
\end{remark}

So, to any map $f$ we can associate a graph with coloured circles. However, not all sets of circles of the graph $G_B$ are permissible. The conditions for $f$ being a map translate into conditions on $\lbrace U\rbrace_f$ as follows:
\begin{quote}
\item[$\bullet$] Given distinct circles $(U,i)$ and $(U',i)$ of $\lbrace U\rbrace_f$ (i.e. they correspond to the same generator $a_i$), then either $U\cap U'\neq \phi$ (so that they have a common vertex which becomes squared in the product $b_Ub_{U'}$) or we have $b_j\in U$ and $b_{j'}\in U'$ (with $j\neq {j'}$) such that $(b_jb_{j'})\in E_B$. This is because $a_i^2=0$, and so \begin{displaymath}
0=f(0)=f(a_i^2)=[f(a_i)]^2
\end{displaymath}

\item[$\bullet$] Further to the previous point, for a distinct pair of generators $a_{i_1}$ and $a_{i_2}$ of $A$ with $a_{i_1}a_{i_2}\in Q_A$ and two circles $(U,i_1)$ and $(U',i_2)$ of $\lbrace U\rbrace_f$, we have the same condition on $U$ and $U'$ as in the previous point. This is because 
\begin{displaymath}
0=f(0)=f(a_{i_1}a_{i_2})=f(a_{i_1})f(a_{i_2})
\end{displaymath}
\end{quote}

In either of the above cases, the condition placed on $U$ and $U'$ is precisely to say that $U$ and $U'$ have an edge between them in $\ind (G_B)$. This observation allows us to think of the maps in \peil in a rather convenient way using cliques and independent sets.

\subsection{Expressing maps using Cliques and Independent sets} \textcolor{white}{\hspace{0.1cm}}

We said in \textbf{\ref{maps as graphs}} that each summand $b_U$ of $f(a_i)$ was an independent set of $G_B$ (i.e. a vertex of $\ind_+(G)$), and moreover, we made the remark that for any two distinct summands $b_{U_1},b_{U_2}$ of $f(a_i)$, there is an edge between them in $\ind_+(G_B)$.

But this precisely means that the summands of $f(a_i)$ form a clique of $\ind_+(G_B)$, i.e. a vertex of $\cl(\ind_+(G_B))$ (and if $f(a_i)=0$, then we take the empty clique).We thus consider the graph
\begin{displaymath}
\cl(\ind_+(G_B))
\end{displaymath}
which we shall denote $\kappa (G_B)$.

With some minor additional calculations, it can be shown that to give a map $f\colon A\ra B$ of our Weil algebras is to give a map of graphs
\begin{displaymath}
G_A \ra \kappa (G_B).
\end{displaymath}

\begin{remark} In fact $\kappa$ defines an endofunctor on \gph, moreover this is canonically a monad. Further, if we take the Kleisli category induced by $\kappa$, then take the full subcategory with objects the cographs, then the bijection above gives an equivalence of categories between this full subcategory and \peil.
\end{remark}

\subsection{Construction of maps}\label{construction} \textcolor{white}{all work and no play makes jack a dull boy}

Recall that we had the maps $\epsilon_W,+,\eta,l$ and $c$ as our ingredients from tangent structure. These can also be represented in the form $\lbrace U\rbrace_f$ as in \textit{Figure 1} below, where the generators $x_1$ and $x_2$ are represented by red and blue respectively (where applicable).

\begin{center}
\begin{tabular}{ccc}
  \hline
  \head{Map} & \head{Action on Generators} & \head{Graph}  \\
  \hline \\
  \vspace{0.7cm}
$\epsilon\colon W\ra k$ & $x_1\mapsto 0$ & (k corresponds to the empty graph) \\

\vspace{1cm}

$id_W\colon W\ra W$ & $x\mapsto x$ &
\begin{tikzpicture}
\coordinate (X1) at (0cm,0cm) {};
\draw (X1) +(-7pt,-7pt) rectangle +(6.5pt,8.5pt);
\node at (X1) {1};

\draw[Red,thick] (0,0) circle (0.6cm);
\end{tikzpicture}
\\

\vspace{1cm}

$+\colon W^2\ra W$ & $x_1\mapsto x$, $x_2\mapsto x$ &
\begin{tikzpicture}
\coordinate (X1) at (0cm,0cm) {};
\draw (X1) +(-7pt,-7pt) rectangle +(6.5pt,8.5pt);
\node at (X1) {1};

\draw[Red,thick] (0,0) circle (0.6cm);
\draw[Blue,thick] (0,0) circle (0.7cm);
\end{tikzpicture}
\\

\vspace{1cm}

$\eta\colon k\ra W$ & (k has no generators) &
\begin{tikzpicture}
\coordinate (X1) at (0cm,0cm) {};
\draw (X1) +(-7pt,-7pt) rectangle +(6.5pt,8.5pt);
\node at (X1) {1};
\end{tikzpicture}
\\

\vspace{1cm}

$l\colon W\ra 2W$ & $x\mapsto x_1x_2$ &
\begin{tikzpicture}
\coordinate (X1) at (0cm,0cm) {};
\draw (X1) +(-7pt,-7pt) rectangle +(6.5pt,8.5pt);
\node at (X1) {1};

\coordinate (X2) at (2cm,0cm) {};
\draw (X2) +(-7pt,-7pt) rectangle +(6.5pt,8.5pt);
\node at (X2) {2};

\draw[Red,thick] (1cm,0cm) ellipse (2cm and 0.7cm);
\end{tikzpicture}
\\

\vspace{1cm}

$c\colon 2W\ra 2W$ & $x_1\mapsto x_2$, $x_2\mapsto x_1$ &
\begin{tikzpicture}
\coordinate (X1) at (0cm,0cm) {};
\draw (X1) +(-7pt,-7pt) rectangle +(6.5pt,8.5pt);
\node at (X1) {1};

\coordinate (X2) at (2cm,0cm) {};
\draw (X2) +(-7pt,-7pt) rectangle +(6.5pt,8.5pt);
\node at (X2) {2};

\draw[Red,thick] (X2) circle(0.6cm);
\draw[blue,thick] (X1) circle(0.6cm);
\end{tikzpicture}

\\
\hline
\vspace{0.1cm}
\end{tabular}

\textit{\small{Figure 1}}
\end{center}
\vspace{0cm}

Pictorially, given $\lbrace U\rbrace_f$ for some map $f\colon A\ra B$, we can naively interpret the composition on the left of the above operations as follows:
\begin{itemize}
\item[$\bullet$] $\epsilon$ corresponds to deleting a particular vertex in $G_B$ as well as any circles that go through that vertex.
\item[$\bullet$] $+$ corresponds to taking two vertices in $G_B$ joined by an edge and collapsing them to a single vertex. Circles that had contained either vertex (but not both) now contain the collapsed vertex instead.
\item[$\bullet$] $\eta$ corresponds to adding a new vertex to $G_B$, but has no effect on any of the existing circles.
\item[$\bullet$] $l$ corresponds to taking a single vertex of $G_B$ and splitting it into two vertices without an edge joining them, and any circle $U$ that contained the original vertex now contain both of the new vertices
\item[$\bullet$] $c$ corresponds to switching labels of unjoined vertices, and does nothing to the circles themselves.
\end{itemize}

These notions will become clearer in the following sections.

It will turn out that any map $f\colon A\ra B$ of \peil\ can be constructed in a canonical manner from the maps above. We shall break this process up into several steps.

\subsubsection{Step 1: Maps $W\ra nW$ with one circle}\label{one circle} \textcolor{red}{\hspace{0.2cm}}

We wish to consider a map of the form $f\colon W\ra nW$ with exactly one circle. Let us begin with an example.

\begin{example}
The map $f\colon W\rightarrow 5W$ given by $x\mapsto x_1x_3x_4$ may be represented as
\begin{center}
\begin{tikzpicture}
\coordinate (X1) at (0cm,0cm) {};
\draw (X1) +(-7pt,-7pt) rectangle +(6.5pt,8.5pt);
\node at (X1) {\textnormal{1}};

\coordinate (X3) at (2cm,0cm) {};
\draw (X3) +(-7pt,-7pt) rectangle +(6.5pt,8.5pt);
\node at (X3) {\textnormal{3}};

\coordinate (X4) at (4cm,0cm) {};
\draw (X4) +(-7pt,-7pt) rectangle +(6.5pt,8.5pt);
\node at (X4) {\textnormal{4}};

\coordinate (X2) at (1cm,1.5cm) {};
\draw (X2) +(-7pt,-7pt) rectangle +(6.5pt,8.5pt);
\node at (X2) {\textnormal{2}};

\coordinate (X5) at (3cm,1.5cm) {};
\draw (X5) +(-7pt,-7pt) rectangle +(6.5pt,8.5pt);
\node at (X5) {\textnormal{5}};

\draw[Red,thick] (2cm,0cm) ellipse (2.7cm and 0.7cm);
\end{tikzpicture}
\end{center}

Define a map $\tilde{f}$ as the composite
\begin{equation*}
  \xymatrix@C+2em@R+2em{
W \ar[r]^-{l} & 2W \ar[r]^-{W\otimes l} & 3W
  }   
\end{equation*}
\begin{displaymath}
x\longmapsto x_1x_2\longmapsto x_1x_2x_3
\end{displaymath}

Then $\lbrace U\rbrace_{\tilde{f}}$ is
\begin{center}
\begin{tikzpicture}
\coordinate (X1) at (0cm,0cm) {};
\draw (X1) +(-7pt,-7pt) rectangle +(6.5pt,8.5pt);
\node at (X1) {\textnormal{1}};

\coordinate (X3) at (2cm,0cm) {};
\draw (X2) +(-7pt,-7pt) rectangle +(6.5pt,8.5pt);
\node at (X2) {\textnormal{2}};

\coordinate (X2) at (1cm,1.5cm) {};
\draw (X3) +(-7pt,-7pt) rectangle +(6.5pt,8.5pt);
\node at (X3) {\textnormal{3}};

\draw[Red,thick] (1cm,0.5cm) circle (1.6cm);
\end{tikzpicture}
\end{center}
i.e. the single circle includes all 3 vertices.

Now define a map $g$ as the map
\begin{align*}
W\otimes\eta\otimes W\otimes W\otimes\eta \colon &3W\ra 5W \\
x_1 & \mapsto y_1 \\
x_2 & \mapsto y_3 \\
x_3 & \mapsto y_4~.
\end{align*}

Then the composite $g\circ\tilde{f}$ is precisely the original map $f$.
\end{example}

This idea will extend to general maps of the form $f\colon W\ra nW$ with exactly one circle $U$. We first use a series of $l$'s to get the correct number of vertices in our circle. Then, we can post-compose with an appropriate combination of $\eta$'s and identities as above to obtain the required map $f$.

\subsubsection{Step 2: Arbitrary maps $W\ra nW$}\label{multiple circles} \textcolor{white}{\small{all work and no play makes jack a dull boy}}

We now discuss maps of the form $f\colon W\ra nW$ with an arbitrary (but finite) number of circles. Note that if there are no circles in $\{ U\}_f$ (i.e. $x\mapsto 0$), then the $f$ is given by (say) the composite
\begin{equation*}
  \xymatrix@C+2em@R+2em{
W \ar[r]^-{\epsilon} & k \ar[r]^-{\eta} & W \ar[r]^-{W\otimes \eta} & \dots \ar[r]^{(n-1)W\otimes \eta}  & nW
  }   
\end{equation*}
i.e. the zero map.

\begin{proposition} \label{prop5.3}
Any map $f\colon W\ra nW$ can be constructed from the generating maps.
\end{proposition}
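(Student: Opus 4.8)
The plan is to induct on the number of nonzero circles of $f$. Writing $f(x)=\sum_U\alpha_U b_U$, let $U_1,\dots,U_r$ be the circles with $\alpha_{U_j}=1$. Since $G_{nW}$ is the discrete graph on $n$ vertices (it has no edges), the validity conditions from \textbf{\ref{maps as graphs}} — where, because $W$ has a single generator, all circles carry the same colour — force any two distinct circles to share a vertex, i.e. $U_i\cap U_j\neq\phi$, equivalently $b_{U_i}b_{U_j}=0$, for all $i\neq j$. The case $r=0$ is the zero map, already handled, and the case $r=1$ is exactly Step 1 (\textbf{\ref{one circle}}). So it remains to assemble a general $f$ out of its single-circle pieces $g_j\colon W\ra nW$, $x\mapsto b_{U_j}$, each of which is constructible by Step 1.

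The key reduction is closure under addition: I claim that if $g_1,g_2\colon W\ra nW$ are constructible with $g_1(x)g_2(x)=0$, then $x\mapsto g_1(x)+g_2(x)$ is constructible; iterating this (the pairwise products all vanish) then expresses $f$ as the sum of the $g_j$. To realize a sum I would use the diagonal $\Delta\colon W\ra W^2$, $x\mapsto t_1+t_2$, which is available from the universal property of the product $W^2=W\times W$ applied to the pair $(id_W,id_W)$; for any $h\colon W^2\ra nW$ one then has $(h\circ\Delta)(x)=h(t_1)+h(t_2)$. Choosing $h$ with $h(t_1)=g_1(x)$ and $h(t_2)=g_2(x)$ produces the sum, and $h$ is a well-defined map precisely because $t_1t_2=0$ in $W^2$ is sent to $g_1(x)g_2(x)=0$ — this is exactly where the overlap condition is used.

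Everything therefore reduces to constructing the combining map $h\colon W^2\ra nW$, and this is the main obstacle: $W^2$ is a product (its graph is complete), and apart from the projections the only generator issuing from a product is the addition $+\colon W^2\ra W$. The plan is to factor $h$ as $(+\otimes D)\circ\psi$ for a suitable Weil algebra $D$, where the final $+\otimes D$ merges the vertices shared by the two circles (licensed by the overlap, which supplies the edge that $+$ collapses) and $\psi$ maps $W^2$ into a foundational pullback that keeps the ``private'' vertices of each circle disjoint from the shared ones. The map $\psi$ is then built using the universal property of the foundational pullback (which produces maps \emph{into} it) out of its two legs, each of which is a product projection $\pi_i\colon W^2\ra W$ post-composed with a single-circle map (a string of lifts $l$) and padded with units $\eta$; relabelling is handled by $c$ and spurious vertices are removed by $\epsilon$.

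I expect the genuinely fiddly part to be the bookkeeping of the shared versus private vertices once several circles are present at once: a single binary merge only identifies one block of shared vertices, while a pairwise-intersecting family need not have a common vertex, so one must verify that iterating the construction circle by circle reproduces all of $f(x)$ and never illegitimately forces a nonzero product to vanish. Organising the induction so that at each stage one adds a single monomial $b_{U_{r}}$ to the already-constructed $\sum_{j<r}b_{U_j}$, and checking at each stage that the two legs of $\psi$ agree on their common projection to the base, is where the real care is needed.
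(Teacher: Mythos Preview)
Your induction on the number of circles is the right scaffold, and your observation that any two distinct circles must share a vertex (since $G_{nW}$ is discrete) is exactly the combinatorial fact that drives the argument. But the detour through the diagonal $\Delta\colon W\ra W^2$ and the combining map $h\colon W^2\ra nW$ is an unnecessary complication that creates the very ``fiddly part'' you worry about at the end, and your description of $h$ contains a real gap: you say the legs of $\psi$ are each a projection $\pi_i\colon W^2\ra W$ followed by a \emph{single-circle} map, but once you iterate to add the $r$-th monomial to $\sum_{j<r}b_{U_j}$, one leg carries $r-1$ circles. Handling that would require either a separate induction on maps $W^2\ra nW$ (which the paper treats only later, in Steps~3--5) or a further nesting of pullbacks that you have not supplied.

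The paper's argument avoids this by never leaving maps out of $W$. Given $f\colon W\ra nW$ with $m>1$ circles, pick two circles $t,t'$ sharing a generator, say $y_n$. Split that one occurrence in the codomain: define $f'\colon W\ra (n-1)W\otimes W^2$ by rewriting $f(x)$ verbatim except that the $y_n$ in $t'$ is replaced by the second generator $\widetilde{y_n}$ of the $W^2$ factor. Then $f=\big((n-1)W\otimes +\big)\circ f'$, and since $(n-1)W\otimes W^2$ is a foundational pullback, it suffices to construct the two legs $\big((n-1)W\otimes\pi_i\big)\circ f'\colon W\ra nW$. The first projection kills $\widetilde{y_n}$, deleting $t'$; the second kills $y_n$, deleting $t$ (and possibly more). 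Each leg therefore has strictly fewer circles than $f$, and the induction closes immediately.

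Notice that your $\psi\circ\Delta$ \emph{is} the paper's $f'$; by keeping $\Delta$ and $\psi$ separate you force yourself to construct $h$ out of $W^2$, which is harder than constructing $f'$ out of $W$. The paper's manoeuvre of splitting the shared vertex directly in the codomain of $f$ means the bookkeeping of shared versus private vertices never arises: each pullback projection simply deletes circles rather than trying to combine them.
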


\begin{proof}
We will do this by induction on $m$, the number of circles of $f$. We have already treated the cases $m=0$ and $m=1$, so suppose that $m>1$.

Explicitly, this means that $f(x)$ is a polynomial in the generators of $nW$ (which we will call $y_1,\dots,y_n$) with $m$ terms.

Recall that for $f$ to be a valid map, since the codomain is $nW$ (or equivalently, the corresponding graph is discrete), then any two distinct terms of $f(x)$ must have (at least) one generator $y_i$ in common. Let $t$ and $t'$ be distinct terms, and without loss of generality, suppose $y_n$ is a common generator.

Now define a map
\begin{displaymath}
f'\colon W\ra (n-1)W\otimes W^2
\end{displaymath}
where $W^2=k[y_n,\widetilde{y_n}]/y_n^2,\widetilde{y_n}^2,y_n\widetilde{y_n}$, with $f'(x)$ having the same expression as $f(x)$, except that the $y_n$ in term $t'$ is replaced with $\widetilde{y_n}$. It is a routine task to check that this is a valid map. Furthermore, the composite
\begin{equation*}
  \xymatrix@C+2em@R+2em{
W \ar[r]^-{f'} & (n-1)W\otimes W^2 \ar[r]^-{(n-1)W\otimes +} & nW
  }   
\end{equation*}
will return the original map $f$. As such, it suffices to show that $f'$ can be constructed from the ingredient maps.

But the codomain of $f'$, $(n-1)W\otimes W^2$, is the pullback
\begin{equation*}
  \xymatrix@C+2em@R+2em{
\ar @{} [dr] | \pbc (n-1)W\otimes W^2~ \ar[r]^-{(n-1)W\otimes\pi_1} \ar[d]_{(n-1)W\otimes\pi_2} & nW \ar[d]^{(n-1)W\otimes\epsilon_W} \\
nW \ar[r]_-{(n-1)W\otimes\epsilon_W} & (n-1)W
}
\end{equation*}
As such, to construct $f'$ it suffices to construct each of the composites
\begin{displaymath}
\big{(}(n-1)W\otimes\pi_i\big{)}\circ f'\colon W\ra nW; ~i\in \{1,2\} .
\end{displaymath}
But now each composite has strictly fewer terms (the first projection removes term $t'$ whilst the second projection removes term $t$, as well as any other terms containing $y_n$).

Now just repeat the process inductively.
\end{proof}

Consider the following example.

\begin{example} Consider the map
\begin{align*}
f\colon W &\ra 3W \\
x &\mapsto x_1x_2+x_1x_3+x_2x_3
\end{align*}
Then $\lbrace U\rbrace_f$ is
\begin{center}
\begin{tikzpicture}[xscale=1, yscale=1]
\newcommand{\elipseWidth}{0.7cm}

\coordinate (E1L) at (1.5cm,1.75cm) {};
\coordinate (E1R) at (-0.5cm,-1.25cm) {};

\coordinate (E2L) at (0.5cm,1.75cm) {};
\coordinate (E2R) at (2.5cm,-1.25cm) {};

\coordinate (E3L) at (-1cm,-0.5cm) {};
\coordinate (E3R) at (3cm,-0.5cm) {};

\pgfextractangle{\angle}{E1L}{E1R}
\draw[rotate around={\angle:($0.5*(E1L) + 0.5*(E1R)$)}, Red, thick] let
\p1 = (E1L),
\p2 = (E1R),
\n1 = {veclen((\x2-\x1),(\y2-\y1))/2},
\n2 = {veclen((\elipseWidth),(0))}
in
($0.5*(E1L) + 0.5*(E1R)$) ellipse (\n1 and \n2);

\pgfextractangle{\angle}{E2L}{E2R}
\draw[rotate around={\angle:($0.5*(E2L) + 0.5*(E2R)$)}, Red, thick] let
\p1 = (E2L),
\p2 = (E2R),
\n1 = {veclen((\x2-\x1),(\y2-\y1))/2},
\n2 = {veclen((\elipseWidth),(0))}
in
($0.5*(E2L) + 0.5*(E2R)$) ellipse (\n1 and \n2);

\pgfextractangle{\angle}{E3L}{E3R}
\draw[rotate around={\angle:($0.5*(E2L) + 0.5*(E2R)$)}, Red, thick] let
\p1 = (E3L),
\p2 = (E3R),
\n1 = {veclen((\x2-\x1),(\y2-\y1))/2},
\n2 = {veclen((\elipseWidth),(0))}
in
($0.5*(E3L) + 0.5*(E3R)$) ellipse (\n1 and \n2);

\coordinate (X1) at (1cm,1cm) {};
\draw (X1) +(-7pt,-7pt) rectangle +(6.5pt,8.5pt);
\node at (X1) {\textnormal{1}};

\coordinate (X2) at (0cm,-0.5cm);
\draw (X2) +(-7pt,-7pt) rectangle +(6.5pt,8.5pt);
\node at (X2) {\textnormal{2}};

\coordinate (X3) at (2cm,-0.5cm);
\draw (X3) +(-7pt,-7pt) rectangle +(6.5pt,8.5pt);
\node at (X3) {\textnormal{3}};
\end{tikzpicture}
\end{center}

We notice that we can construct this map if we can construct the map
\begin{align*}
f'\colon W &\ra (2W\otimes W^2)=k[x_1,x_2,x_3,x_4]/x_1^2,x_2^2,x_3^2,x_4^2,x_3x_4 \\
x &\mapsto x_1x_2+x_2x_3+x_1x_4
\end{align*}

where $\lbrace U\rbrace_{f'}$ is
\begin{center}
\begin{tikzpicture}
\coordinate (X1) at (0cm,0cm) {};
\draw (X1) +(-7pt,-7pt) rectangle +(6.5pt,8.5pt);
\node at (X1) {\textnormal{1}};

\coordinate (X2) at (0cm,-2cm) {};
\draw (X2) +(-7pt,-7pt) rectangle +(6.5pt,8.5pt);
\node at (X2) {\textnormal{2}};

\coordinate (X3) at (2cm,-2cm) {};
\draw (X3) +(-7pt,-7pt) rectangle +(6.5pt,8.5pt);
\node at (X3) {\textnormal{3}};

\coordinate (X4) at (2cm,0cm) {};
\draw (X4) +(-7pt,-7pt) rectangle +(6.5pt,8.5pt);
\node at (X4) {\textnormal{4}};

\draw[-,>=triangle 45] ($(X3)!\csize!(X4)$) -- ($(X4)!\csize!(X3)$);

\draw[Red,thick] (1cm,0cm) ellipse (1.5cm and 0.6cm);
\draw[Red,thick] (1cm,-2cm) ellipse (1.5cm and 0.6cm);
\draw[Red,thick] (0cm,-1cm) ellipse (0.6cm and 1.5cm);
\end{tikzpicture}
\end{center}
since composition on the left with the map $2W\otimes +$ will return the map $f$ (recall that $+$ collapses two vertices joined by an edge down to a single vertex). 

But recall that for an arbitrary Weil algebra $A$, the functor $A\otimes\blank$ preserves pullbacks. So, we can regard $2W\otimes W^2$ as the functor $2W\otimes\underline{\hspace{0.3cm}}$ applied to the product $W^2$ regarded as a pullback over $k$, so that $2W\otimes W^2$ is part of a pullback diagram
\begin{center} 
\begin{tikzpicture}
\coordinate (X1) at (0cm,0cm) {};
\draw (X1) +(-7pt,-7pt) rectangle +(6.5pt,8.5pt);
\node at (X1) {\textnormal{1}};

\coordinate (X2) at (0cm,-2cm) {};
\draw (X2) +(-7pt,-7pt) rectangle +(6.5pt,8.5pt);
\node at (X2) {\textnormal{2}};

\coordinate (X3) at (2cm,-2cm) {};
\draw (X3) +(-7pt,-7pt) rectangle +(6.5pt,8.5pt);
\node at (X3) {\textnormal{3}};

\coordinate (X4) at (2cm,0cm) {};
\draw (X4) +(-7pt,-7pt) rectangle +(6.5pt,8.5pt);
\node at (X4) {\textnormal{4}};

\draw[-,>=triangle 45] ($(X3)!\csize!(X4)$) -- ($(X4)!\csize!(X3)$);

\draw[Red,thick] (1cm,0cm) ellipse (1.5cm and 0.6cm);
\draw[Red,thick] (1cm,-2cm) ellipse (1.5cm and 0.6cm);
\draw[Red,thick] (0cm,-1cm) ellipse (0.6cm and 1.5cm);

\draw [->,>=triangle 45,thick] (3cm,-1cm) --node[above]{$2W\otimes\pi_1$} (5cm,-1cm);
\draw [->,>=triangle 45,thick] (1cm,-3cm) --node[left]{$2W\otimes\pi_2$} (1cm,-5cm);
\draw [->,>=triangle 45,thick] (3cm,-7cm) --node[below]{$2W\otimes p$} (5cm,-7cm);
\draw [->,>=triangle 45,thick] (7cm,-3cm) --node[right]{$2W\otimes p$} (7cm,-5cm);

\coordinate (X5) at (6cm,0cm) {};
\draw (X5) +(-7pt,-7pt) rectangle +(6.5pt,8.5pt);
\node at (X5) {\textnormal{1}};

\coordinate (X6) at (6cm,-2cm) {};
\draw (X6) +(-7pt,-7pt) rectangle +(6.5pt,8.5pt);
\node at (X6) {\textnormal{2}};

\coordinate (X7) at (8cm,-2cm) {};
\draw (X7) +(-7pt,-7pt) rectangle +(6.5pt,8.5pt);
\node at (X7) {\textnormal{3}};

\draw[Red,thick] (7cm,-2cm) ellipse (1.5cm and 0.6cm);
\draw[Red,thick] (6cm,-1cm) ellipse (0.6cm and 1.5cm);
\draw[Red,thick] (0cm,-7cm) ellipse (0.6cm and 1.5cm);
\draw[Red,thick] (7cm,-7cm) ellipse (0.6cm and 1.5cm);
\draw[Red,thick] (1cm,-6cm) ellipse (1.5cm and 0.6cm);

\draw (0cm,-6cm) +(-7pt,-7pt) rectangle +(6.5pt,8.5pt);
\node at (0cm,-6cm) {\textnormal{1}};

\draw (2cm,-6cm) +(-7pt,-7pt) rectangle +(6.5pt,8.5pt);
\node at (2cm,-6cm) {\textnormal{4}};

\draw (0cm,-8cm) +(-7pt,-7pt) rectangle +(6.5pt,8.5pt);
\node at (0cm,-8cm) {\textnormal{2}};

\draw (7cm,-6cm) +(-7pt,-7pt) rectangle +(6.5pt,8.5pt);
\node at (7cm,-6cm) {\textnormal{1}};

\draw (7cm,-8cm) +(-7pt,-7pt) rectangle +(6.5pt,8.5pt);
\node at (7cm,-8cm) {\textnormal{2}};
\end{tikzpicture}
\end{center}
where the circles in the pullback diagram depict $f$ composed with the projections. So, to construct $f$ is equivalent to constructing each of $(2W\otimes\pi_1)\circ f$ and $(2W\otimes\pi_2)\circ f$. 

Notice that, whilst we now have to construct 2 maps, each has strictly fewer circles than $f$. So, we can then consider $(2W\otimes\pi_1)\circ f$ and apply the same trick of splitting the vertex where the circles intersect and thus constructing $(2W\otimes\pi_1)\circ f$ is equivalent to constructing 2 maps, but each now with only one circle, which we can do using \ref{one circle}.
\end{example}

\begin{remark} Note that maps of this form cannot give rise to non-intersecting circles as it would violate
\begin{displaymath}
f(x)\cdot f(x)=f(x^2)=f(0)=0
\end{displaymath}
\end{remark}

We can actually take this one step further. Suppose we have an arbitrary map $f\colon W\ra nW$ with $\{ U_f\}$ given. For each $i\in \{1,\dots,n\}$, let $m_i$ be the number of circles containing vertex $i$ (or equivalently, the number of terms of $f(x)$ containing the generator $y_i$). Then, in a similar manner as before, we can define a map
\begin{displaymath}
f'\colon W\ra W^{m_1}\otimes \dots \otimes W^{m_n}
\end{displaymath}
in such a way that $(+_{m_1}\otimes \dots\otimes +_{m_n})\circ f'=f$. Note that if $m_i=0$ for some $i$, then $W^0$ is simply the nullary product $k$.

Now, iteratively using the fact that 
\begin{equation*}
  \xymatrix@C+2em@R+2em{
\ar @{} [dr] | \pbc A\otimes (B_1\times B_2) \ar[r]^-{A\otimes \pi_1} \ar[d]_{A\otimes \pi_2} & A\otimes B_1 \ar[d]^{A\otimes\epsilon_1} \\
A\otimes B_2 \ar[r]_-{A\otimes\epsilon_2} & A
}
\end{equation*}
is a pullback for all $A$, $B_1$ and $B_2\in$ \peil, then it is relatively easy to show that $W^{m_1}\otimes \dots \otimes W^{m_n}$ is a limit of an appropriate diagram of tensor powers of $W$'s.

As such, $f$ decomposes immediately into a set of maps $\{f_j\colon W\ra nW\}$, each of the type described in \ref{one circle} (or a zero map).

\subsubsection{Step 3: Projection maps $A\ra W$}\label{projections} \textcolor{white}{\small{all work and no play makes jack a dull boy}}

Given an arbitrary object $A=k[a_1,\dots,a_n]/Q_A$ in \peil, we wish now to consider maps of the form $f\colon A\ra W$ with $a_i\mapsto x$ for some fixed $i$ and $a_j\mapsto 0~\forall ~j\neq i$.

\begin{proposition} Let $G$ be a non-empty cograph, and let a vertex $u$ of $G$ be given. Then, there is a unique morphism of Weil algebras
\begin{displaymath}
f_{G,u}\colon k[G]\ra W,
\end{displaymath}
sending $u$ to $x$ and all other generators to 0, which can be constructed from the maps in Figure 1 as well as the product projections.
\end{proposition}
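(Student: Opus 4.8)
The plan is to argue by strong induction on the number of vertices $n$ of $G$, exploiting the recursive structure of cographs: every non-empty cograph is either a single vertex, a disjoint union $G_1\otimes G_2$, or a graph join $G_1\times G_2$ of two non-empty cographs. Throughout I use that the class of constructible maps is closed under composition and under the operations $\otimes$ and $\times$ of maps, together with the identifications $k[G_1]\otimes k[G_2]=k[G_1\otimes G_2]$ and $k[G_1]\times k[G_2]=k[G_1\times G_2]$. Uniqueness is immediate, since a map of Weil algebras is determined by its values on generators and these are prescribed; so the content is existence together with constructibility. The base case $n=1$ gives $k[G]=W$ with $u$ the unique generator, whence $f_{G,u}=id_W$, which already appears in Figure 1.

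For the inductive step with $n>1$, I decompose $G$ at the top level and assume without loss of generality that $u$ lies in $G_1$. If $G=G_1\times G_2$ (join), then $k[G]=k[G_1]\times k[G_2]$, and I set $f_{G,u}=f_{G_1,u}\circ\pi_1$, where $\pi_1\colon k[G]\ra k[G_1]$ is the product projection (explicitly permitted) and $f_{G_1,u}$ is supplied by the induction hypothesis. Since $\pi_1$ fixes the generators of $G_1$ and sends those of $G_2$ to $0$, the composite sends $u\mapsto x$ and every other generator to $0$, as required.

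If instead $G=G_1\otimes G_2$ (disjoint union), there is no projection available, so I route through the augmentation: set $f_{G,u}=f_{G_1,u}\circ(id_{k[G_1]}\otimes\epsilon_{G_2})$, where $\epsilon_{G_2}\colon k[G_2]\ra k$ is the augmentation. This augmentation is itself constructible, because for any vertex $w$ of the non-empty $G_2$ one has $\epsilon_{G_2}=\epsilon\circ f_{G_2,w}$, with $\epsilon\colon W\ra k$ drawn from Figure 1 and $f_{G_2,w}$ supplied by the induction hypothesis; note that $G_2$ has strictly fewer vertices than $G$, so the induction remains well-founded. The map $id_{k[G_1]}\otimes\epsilon_{G_2}$ fixes the generators of $G_1$ and kills those of $G_2$, so once more the composite acts as $u\mapsto x$ with all other generators mapping to $0$.

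The only genuinely structural input is the cograph decomposition theorem guaranteeing the join/disjoint-union splitting at each stage; everything else is bookkeeping of the action on generators, which I would record in a short verification. I expect the disjoint-union case to be the step requiring the most care, precisely because coproducts carry no projections: there one must pass through $\epsilon_{G_2}$, and the crucial observation keeping the argument non-circular is that this augmentation factors as $\epsilon\circ f_{G_2,w}$ through a strictly smaller cograph.
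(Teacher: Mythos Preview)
Your proof is correct and follows essentially the same recursive strategy as the paper: identity on the one-vertex graph, projection-then-recurse for the join $\times$, and kill-the-other-factor-then-recurse for the disjoint union $\otimes$. You are in fact slightly more careful than the paper in the $\otimes$ case: the paper simply writes $f_{H,u}\otimes\epsilon_{k[J]}$ without comment, whereas you explicitly justify constructibility of the augmentation $\epsilon_{G_2}$ by factoring it as $\epsilon\circ f_{G_2,w}$ through the inductive hypothesis on the strictly smaller graph $G_2$.
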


\begin{proof} Uniqueness is immediate since we have specified the action of $f_{G,u}$ on each generator. We now prove existence, done recursively as follows:
\begin{quote}
\item[1)] If $G=\{\bullet\}$, then $f_{G,u}$ is the identity.
\item[2)] Given graphs $H$ and $J$, if $G=H\otimes J$ with $u\in H$, then define $f_{G,u}$ as
\begin{equation*}
\xymatrix@C+2em@R+2em{
k[G]=k[H\otimes J]= k[H]\otimes k[J] \ar[r]^{\hspace{1.9cm}f_{H,u}\otimes\epsilon_{k[J]}} & W\otimes k\cong W
}   
\end{equation*}
\item[3)] If $G=H\times J$ with $u\in H$, then define $f_{G,u}$ as
\begin{equation*}
\xymatrix@C+2em@R+2em{
k[G]=k[H\times J]= k[H]\times k[J] \ar[r]^{\hspace{1.9cm}\pi_{k[H]}} & k[H] \ar[r]^{f_{H,u}} & W
}   
\end{equation*}

\end{quote}
\end{proof}

\subsubsection{Step 4: Maps $A\ra nW$ with no intersecting circles}\label{AnW} \textcolor{white}{\hspace{0.1cm}}

Suppose now we are interested in maps of the form
\begin{displaymath}
f\colon A\ra nW
\end{displaymath}
with no intersecting circles. If $f$ is the zero map, then its construction is trivial. If $\{ U\}_f$ has only one circle $(U,i)$ for some $a_i$ (i.e. every other generator maps to 0), then we can construct $f$ by first taking a projection as in \ref{projections} which preserves $a_i$, then compose with a map created using \ref{one circle} which picks out exactly the circle $U$.

However, if $\{ U\}_f$ contains multiple (distinct) circles, then for a distinct pair $(U,j)$ and $(U',j')$ we first make the following observations:
\begin{quote}
\item[$\bullet$] $U\cap U'=\phi$, since $f$ has no intersecting circles
\item[$\bullet$] $j\neq j'$, i.e. these circles must be associated with distinct generators of $A$. Equivalently, each generator can correspond to at most one circle.
\item[$\bullet$] $a_ja_{j'}\notin Q_A$, i.e. there is no edge between $a_j$ and $a_{j'}$ in the graph $G_A$
\end{quote}

So, let \ca be the full subcategory of \peil consisting of all objects $A$ with the property that any map $A\ra nW$ with no intersecting circles is constructible from the generating maps. Now, $W\in \ca$ by \ref{one circle} and clearly $k\in\ca$.

For arbitrary $A_1$ and $A_2\in\ca$, let an arbitrary map $f\colon A_1\times A_2\ra nW$ with no intersecting circles be given. Without loss of generality, let $a$ be a generator of $A_1$ for which $f(a)\neq 0$. Let $a'$ be an arbitrary generator of $A_2$. We know that in $A_1\times A_2$, we have $aa'=0$ by definition. Therefore $f(aa')=f(a)f(a')=0$. But since the codomain of $f$ is $nW$ and $f$ has no intersecting circles, then we must have $f(a')=0$. This is true for all generators of $A_2$. Therefore $f$ factors through the projection $\pi_1\colon A_1\times A_2\ra A_1$. Thus $f$ is constructible. Thus we have $A_1\times A_2\in\ca$.

Now suppose that an arbitrary map $f\colon A_1\otimes A_2\ra nW$ with no intersecting circles is given. Then, with some appropriate post-composition with $c$'s, we can write $f=f_1\otimes f_2$, for an appropriate pair $f_1\colon A_1\ra rW$ and $f_2\colon A_2\ra (n-r)W$ neither of which have intersecting circles.Thus $f$ is constructible. Thus we have $A_1\otimes A_2\in\ca$.

Now, since \ca is a full subcategory of \peil containing $k$, $W$ and is closed under $\times$ and $\otimes$, then \ca is just \peil itself. Thus any map $A\ra nW$ with no intersecting circles is constructible.

\subsubsection{Step 5: Arbitrary maps $A\ra nW$}\label{aAnW}\textcolor{white}{\small{all work and no play makes jack a dull boy}}

Now that we are able to construct maps of the form $f\colon A\ra nW$ with no intersecting circles (i.e. those of \ref{AnW}), we can combine this process with the ideas of \ref{multiple circles} to construct arbitrary maps of the form $f\colon A\ra nW$.

For an arbitrary map $f\colon A\ra nW$, we can use a similar idea to that discussed in \ref{multiple circles} to first construct an analogous map $f'\colon A\ra W^{m_1}\otimes \dots \otimes W^{m_n}$ as follows:
\begin{itemize}
\item[1)] For each generator $a_i$ of $A$, take the polynomial $f(a_i)$ in the generators $z_1,\dots ,z_n$ of $nW$
\item[2)] Let $m_j$ be the total number of terms across all the polynomials $f(a_1)$ containing $z_j$ for $j=1,\dots,n$
\item[3)] Define the map $f'\colon A\ra W^{m_1}\otimes \dots \otimes W^{m_n}$ by specifying each $f'(a_i)$ to be $f(a_i)$, but in such a way that each generator of $W^{m_1}\otimes \dots \otimes W^{m_n}$ is used exactly once (in a similar fashion to the proof for Proposition \ref{prop5.3})
\end{itemize}

\begin{example}
Consider the map $f\colon 2W\ra 3W$ given as
\begin{align*}
x_1 &\mapsto y_1y_2+y_1y_3 \\
x_2 &\mapsto y_2y_3~.
\end{align*}

Noting that each generator $y_i$ appear in exactly two monomials, then we have the map $f'\colon 2W\ra W^2\otimes W^2\otimes W^2$ given as
\begin{align*}
x_1 &\mapsto y_1y_2+y'_1y_3 \\
x_2 &\mapsto y'_2y'_3
\end{align*}
\end{example}

Given $f'$, it is then easy to see that the triangle
\begin{equation*}
\xymatrix{
& W^{m_1}\otimes \dots \otimes W^{m_n} \ar[d]^{+_{m_1}\otimes \dots\otimes +_{m_n}} \\
A \ar[r]_f \ar[ur]^-{f'} & nW
}
\end{equation*}
commutes. Further, for each projection $\pi =\pi_{i_1}\otimes \dots\otimes\pi_{i_n}$, $\pi\circ f'\colon A\ra nW$ has no intersecting circles, and it suffices to consider each such composite.

\subsubsection{Step 6: Arbitrary maps $A\ra B$}\label{mapsAB}\textcolor{white}{\small{all work and no play makes jack a dull boy}}

We now have everything we need to construct (or decompose, depending on your perspective) arbitrary maps $f\colon A\ra B$.

We need only note that if the corresponding graph $G_B$ has edges, then since it is a cograph, it can be expressed non-trivially as $(G_1\times G_2)\otimes H$ (with $H$ possibly being the empty graph, see \cite{cogr} for more details). Correspondingly, $B=(k[G_1]\times k[G_2])\otimes k[H]$ is the pullback
\begin{equation*}
  \xymatrix{
\ar @{} [dr] | \pbc B \ar[r]^-{\pi_1\otimes k[H]} \ar[d]_{\pi_2\otimes k[H]} & k[G_1]\otimes k[H] \ar[d]^{\epsilon_1\otimes k[H]} \\
k[G_2]\otimes k[H] \ar[r]_-{\epsilon_2\otimes k[H]} & k[H]
}
\end{equation*}
and so $f\colon A\ra B$ decomposes into the pair $(\pi_i\otimes k[H])\circ f$; $i=1,2$, and note that the graphs $G_i\otimes H$ for the codomains each have fewer vertices than $G_B$. As such, we repeat this process until the codomains are all of the form $nW$, then apply the idea from \ref{aAnW}.

\subsection{Additional coefficients} \textcolor{white}{\small{all work and no play makes jack a dull boy}}

So, \peil\ is the full subcategory of \weil, and we have described above a process to construct any map using a relatively simple set of ingredients. However, our constructions relied on $k$ being $\btwo$, and hence we limit the permissible maps by removing the possibility of coefficients that are neither 0 nor 1.

\begin{proposition}
The canonical functor
\begin{displaymath}
\psi\colon\bn\textnormal{-}\peil\ra \btwo\textnormal{-}\peil
\end{displaymath}
induced by the morphism $\psi\colon\bn\ra\btwo$, is bijective on objects and full.
\end{proposition}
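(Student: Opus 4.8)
The plan is to read off both the object-map and the morphism-map of $\psi$ from the coefficient description of maps in \ref{maps as graphs}, and then to exploit the fact that neither $\bn$ nor $\btwo$ admits additive cancellation. Bijectivity on objects is the easy half: the objects of $k\textnormal{-}\peil$ (for $k=\bn$ or $k=\btwo$) are exactly the Weil algebras $k[G]$ indexed bijectively by cographs $G$, and the class of cographs is purely combinatorial and independent of the rig $k$. Since $\psi$ sends $\bn[G]$ to $\btwo[G]$, on objects it is simply the identity on the set of cographs, hence a bijection.

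For fullness I would first recall that a map $f\colon k[G_A]\ra k[G_B]$ is precisely a choice of coefficients $\alpha_U^i\in k$, with $f(a_i)=\sum_{U\in\ind_+(G_B)}\alpha_U^i b_U$, constrained only by requiring the relations $a_i^2=0$ and $a_ia_j=0$ (for each edge of $G_A$) to map to $0$. The central claim is that these validity constraints are identical over $\bn$ and over $\btwo$. Expanding $f(a_i)^2$, the diagonal terms $b_U^2$ vanish automatically since each generator squares to zero, so validity reduces to asking that each off-diagonal product $b_Ub_{U'}$ be zero in $k[G_B]$ whenever both coefficients $\alpha_U^i,\alpha_{U'}^i$ are nonzero; the same computation applied to $f(a_i)f(a_j)$ handles the edge relations. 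The point is that in both rigs a finite sum is zero if and only if every summand is zero --- in $\bn$ because it has no zero divisors and only non-negative elements, and in $\btwo$ because its addition is $\maxx$ and so carries no cancellation --- so the condition ``$b_Ub_{U'}=0$ whenever the relevant coefficients are nonzero'' is literally the same in the two settings.

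Fullness then follows by an explicit lift. Given a $\btwo$-map $g$ with coefficients $\beta_U^i\in\{0,1\}$, define an $\bn$-map $f$ by the same coefficients $\alpha_U^i=\beta_U^i$, now viewed inside $\bn$ via $\{0,1\}\subseteq\bn$. Validity of $g$ forces exactly which products $b_Ub_{U'}$ must vanish, and by the previous paragraph these are precisely the products that must vanish for $f$ to be a well-defined $\bn$-map; hence $f$ is a genuine morphism of $\bn\textnormal{-}\peil$. Because $\psi$ restricts to the identity on $\{0,1\}$, we obtain $\psi(f)=g$, which is fullness.

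I expect the real work to lie in the careful verification that $\btwo$-validity is never strictly weaker than $\bn$-validity on the same coefficient data, that is, in ruling out a $\btwo$-map which is well-defined only because the collapse $1+1=1$ conceals a would-be obstruction. The no-cancellation property isolated above is exactly what closes this gap. I would also record, as a sanity check consistent with the statement, that $\psi$ is visibly \emph{not} faithful: over $\bn$ the coefficients $1$ and $2$ both map to $1$ in $\btwo$, so distinct $\bn$-maps can share an image; this is why only fullness and bijectivity on objects are asserted.
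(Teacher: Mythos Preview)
Your argument is correct and follows the same route as the paper: lift a $\btwo$-map to an $\bn$-map by reusing its $\{0,1\}$ coefficients verbatim. The paper's proof is a two-line assertion of this lift without the validity check, whereas you actually supply the justification (via the no-cancellation observation) that the paper leaves implicit.
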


\begin{proof}
Bijectivity on objects is immediate. For any morphism $f\colon A\ra B$ of $\btwo\textnormal{-}\peil$, there is a corresponding map $g\colon A\ra B$ in $\bn\textnormal{-}\peil$ given by the same action on generators as $f$. Clearly, we then have $\psi g=f$.
\end{proof}

Now, if we wish to construct some map $g\colon A\ra B$ in $\bn\textnormal{-}\peil$, the process is the same as for the corresponding map $\psi g\colon A\ra B$, except that each circle of $\psi g$ will need an appropriate coefficient.

\subsection{Coefficients in $\mathbb{N}$}\label{coefficients} \textcolor{white}{\small{all work and no play makes jack a dull boy}}

We now discuss the case where $k=\mathbb{N}$ for the tangent structures of \cite{cocr}. Consider the following diagram
\begin{equation*}
  \xymatrix@C+2em@R+2em{
   & W \ar[dl]_-{id_W} \ar[dr]^-{id_W} \ar@{-->}[d]^{g_2} & \\
   W & W^2 \ar[l]^-{\pi_1} \ar[r]_-{\pi_2} & W
  }   
\end{equation*}
where the uniquely induced map $g_2$ is defined as $g_2(x)=x_1+x_2$ (more explicitly, we have $g_2=\Delta$). Define the following composite
\begin{equation*}
  \xymatrix@C+1em@R+1em{
W \ar[r]^-{g_2} & W^2 \ar[r]^-{+} & W
  }   
\end{equation*}
to be $\hat{g}_2$. Note that we now have $\hat{g}_2(x)=2x$.

Now, consider the diagram
\begin{equation*}
  \xymatrix@C+2em@R+2em{
   & W \ar[dl]_-{id_W} \ar[dr]^-{\hat{g}_2} \ar@{-->}[d]^{g_3} & \\
   W & W^2 \ar[l]^-{\pi_1} \ar[r]_-{\pi_2} & W
  }   
\end{equation*}
where $g_3(x)=x_1+2x_2$. Then we define the composite $\hat{g}_3$ in a similar fashion as before so that $\hat{g}_3(x)=3x$. 

We repeat this process iteratively to obtain a set of maps
\begin{displaymath}
\Big{\lbrace } \hat{g}_r\colon W\ra W~|~\forall~r\in\mathbb{N} \Big{ \rbrace }
\end{displaymath}
where $\hat{g}_r(x)=rx$ (for completeness, take $\hat{g}_1$ to be the identity and $\hat{g}_0$ to be the composite $\eta\circ\epsilon \colon W\ra\mathbb{N}\ra W$). We give this detailed construction of these coefficient maps because we will need precisely this process to induce the corresponding maps in the final section).

Then this set of maps along with the previous techniques allow us to construct arbitrary maps with coefficients in $\mathbb{N}$, since we can use these maps $\hat{g}_r$ as an intermediate step between the projection described in \ref{projections} and the one-circle maps in \ref{one circle} to get our coefficients.

More generally though, whatever form $k$ may take (say $\mathbb{Z}$ or $\mathbb{R}$), our ``generating maps" would contain sufficient extra maps to allow us to generate the corresponding coefficients.

To reiterate, taking $k=\mathbb{N}$, the category \peil\ is the full subcategory of \augalg, and we have a methodical process for generating both the objects and morphisms.

\subsection{Choices}\textcolor{white}{\small{all work and no play makes jack a dull boy}} \label{choices}

In \ref{multiple circles} and \ref{aAnW}, there was an element of choice involved, namely given a map $f\colon A\ra nW$, the corresponding map $f'\colon A\ra W^{m_1}\otimes \dots \otimes W^{m_n}$ required a choice as to which circle would correspond to which projection. Ultimately, this choice is inconsequential as different choices are (up to isomorphism) equivalent.

However, for the purposes of what we wish to do, we will assume that for each $f\colon A\ra nW$, there is some pre-determined choice that has already been made regarding the corresponding map $f'$.

This then implicitly equips each map $g\colon A\ra B$ of \peil with a set of instructions for its decomposition (and hence reconstruction from the generating components).

\subsection{The map $\Omega$}\label{omega} \textcolor{white}{\small{all work and no play makes jack a dull boy}}

We will require a very particular map we shall call $\Omega$ in the next section, but we will introduce it here.

Consider the (trivially) commuting diagram
\begin{equation*}
  \xymatrix{
A \ar[r]^{f} \ar@/_12pt/[rr]_{h=g\circ f} & B \ar[r]^{g} & nW
}
\end{equation*}
for arbitrary $f\colon A\ra B$ and $g\colon B\ra nW$.

Since both $g$ and $h$ have codomain $nW$, then they decompose (in the sense of \ref{aAnW}) as
\begin{equation*}
\xymatrix{
& W^{\beta_1}\otimes\dots\otimes W^{\beta_n} \ar[d]^{+_\beta} & & W^{\alpha_1}\otimes\dots\otimes W^{\alpha_n} \ar[d]^{+_\alpha} \\
B \ar[r]_g \ar[ur]^-{g'} & nW & A \ar[r]_h \ar[ur]^-{h'} & nW
}
\end{equation*}

So, our goal now is to define a map
\begin{displaymath}
\Omega\colon W^{\alpha_1}\otimes\dots\otimes W^{\alpha_n} \ra W^{\beta_1}\otimes\dots\otimes W^{\beta_n}
\end{displaymath}
so that
\begin{equation*}
\xymatrix{
A \ar[r]^-{h'} \ar[d]_f & W^{\alpha_1}\otimes\dots\otimes W^{\alpha_n} \ar[d]^\Omega \\
B \ar[r]_-{g'} & W^{\beta_1}\otimes\dots\otimes W^{\beta_n}
}
\end{equation*}
commutes.

Since $W^{\beta_1}\otimes\dots\otimes W^{\beta_n}$ is a limit (as discussed in \ref{multiple circles}), then it suffices to define each map $\Omega_{(r_1,\dots,r_n)}$ as below
\begin{equation*}
\xymatrix{
W^{\alpha_1}\otimes\dots\otimes W^{\alpha_n} \ar[drrr]^{\Omega_{(r_1,\dots,r_n)}} \ar@{-->}[d]_{\Omega} &&& \\
W^{\beta_1}\otimes\dots\otimes W^{\beta_n} \ar[rrr]_-{(r_1,\dots,r_n)=\pi_1\otimes \dots\otimes \pi_n} & & & nW~.
}
\end{equation*}
But to give $\Omega_{(r_1,\dots,r_n)}$, it suffices to say where each generator of $W^{\alpha_1}\otimes\dots\otimes W^{\alpha_n}$ is sent. Let $y_1$ be a generator of $W^{\alpha_1}$ (without loss of generality, let $\alpha_1\geq 1$). We will also refer to the generators of $nW$ as $z_1,\dots,z_n$. Observe that $+_\beta(y_1)=z_1$.

Recall from \ref{choices} the construction of $h'\colon A\ra W^{\alpha_1}\otimes\dots\otimes W^{\alpha_n}$. There is a unique circle $(U_1,a)$ for some generator $a$ of $A$ with $y_1\in U_1$ (and correspondingly, a unique circle $(U_1,a)$ of $h$ as well with $z_1\in U_1$). Recall also that $h=g\circ f$. Let
\begin{displaymath}
h(a)=U_1 +U_2+\dots~,
\end{displaymath}
where each $U_i$ is a monomial in the generators $z_1,\dots,z_n$. Similarly, let
\begin{displaymath}
f(a)=V_1+V_2+\dots~,
\end{displaymath}
where each $V_i$ is a monomial in the generators $\{ b_j\}$ of $B$.

Then (ignoring coefficients), since $g$ preserves addition and multiplication, we can express $(g\circ f)(a)$ as
\begin{align*}
(g\circ f)(a) &= g(f(a)) \\
&= g(V_1+V_2+\dots) \\
&= g(V_1)+g(V_2)+\dots \\
&= [\prod_{b_j\in V_1}g(b_j)]+[\prod_{b_j\in V_2}g(b_j)]+\dots~.
\end{align*}

But this needs to be equal to $h(a)$. In particular, $U_1$ must be somewhere in the expression for $(g\circ f)(a)$. Without loss of generality, suppose $U_1$ is contained in the first term
\begin{displaymath}
\prod_{b_j\in V_1}g(b_j).
\end{displaymath}

Now, for each $b_j\in V_1$, we must be able to choose precisely one circle $Q_j$ in such a way that
\begin{displaymath}
\bigcup_{b_j\in V_1} Q_j =U_1
\end{displaymath}
with the $Q_j$'s pairwise distinct. This is because for each $b_j\in V_1$, $g(b_j)$ is a polynomial in the generators $z_1,\dots,z_n$. Then, if the product of these polynomials (which in turn is another polynomial) is to contain a particular monomial (namely $U_1$), then this monomial must have arisen as the product of one monomial from each of the factor polynomials.

Moreover, since $z_1\in U_1$, then we also have $z_1\in Q_j$ for a unique $j$. Take $j=1$ so that $Q_1$ is one of the terms of the polynomial $g(b_1)$.
\begin{align*}
&\Rightarrow Q_1 \textnormal{ is a circle of } g \textnormal{ corresponding to } b_1 \\
&\Rightarrow \textnormal{In } g'\colon B\ra W^{\beta_1}\otimes\dots\otimes W^{\beta_n} \textnormal{, } \exists ! \textnormal{ generator }v \textnormal{ of } W^{\beta_1} \textnormal{ corresponding to the circle } Q_1 \\
&\Rightarrow \textnormal{Define } \Omega_{(r_1,\dots,r_n)}(y_1)=\left\{
    \begin{array}{ll}
      z_1 &;~(r_1,\dots,r_n) \text{ preserves } v \text{ (in particular, }r_1 \text{ preserves } v) \\ 
      0 &; \text{ otherwise} \\
    \end{array}
  \right.
\\
\end{align*}
and repeat for all generators of $W^{\beta_1}\otimes\dots\otimes W^{\beta_n}$.

In particular, note that since $\Omega$ can only assign a generator from any $W^{\alpha_i}$ to a generator of the corresponding $W^{\beta_i}$, then we have $\Omega =\Omega_1\otimes\dots\otimes\Omega_n$, for appropriate maps $\Omega_i\colon W^{\alpha_i}\ra W^{\beta_i}$.

\section{Linking back to Tangent structure}

In the previous section, we defined a full subcategory \peil\ of \weil\ and showed how to construct any given map in this subcategory, with the restriction that $k=\mathbb{N}$.

We shall conclude by linking these ideas about Weil algebras to tangent structure in a much more explicit manner.

\subsection{Constructing a functor $F:\peil\ra \en $}\textcolor{white}{\hspace{0.1cm}}

\begin{theorem}\label{bigtheorem}
Suppose we have a given category \cm. Regard \textnormal{End}$(\cm)$ as a monoidal category with respect to composition and \peil\ as monoidal with respect to coproduct.

Then to give a tangent structure $\mathbb{T}$ to \cm is equivalent (up to isomorphism) to giving a strong monoidal functor $F\colon \peil\ra \textnormal{End}(\cm)$ which satisfies the following conditions:

1) Given a product $A=A_1\times A_2$ of \peil, regarded as a pullback of the augmentations, and an arbitrary Weil algebra $B$, then $F$ preserves the pullback
\begin{equation*}
  \xymatrix{
\ar @{} [dr] | \pbc B\otimes A \ar[r]^{B\otimes\pi_1} \ar[d]_{B\otimes\pi_2} & B\otimes A_1 \ar[d]^{B\otimes\epsilon_1} \\
B\otimes A_2 \ar[r]_{B\otimes\epsilon_2} & B
}
\end{equation*}
i.e. it preserves all ``foundational pullbacks" of \peil (as defined in \textnormal{\ref{weil facts 2}}).

2) The equaliser
\begin{equation*}
  \xymatrix@C+1em@R+1em{
W^2 \ar[r]^{v} & 2W \ar@<.5ex>[rr]^{W\otimes\epsilon_W} \ar@<-.5ex>[rr]_{\eta_W\circ (\epsilon_W\otimes\epsilon_W)} & & W
}
\end{equation*}
is preserved.
\end{theorem}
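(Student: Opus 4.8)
The plan is to prove the equivalence by exhibiting two mutually inverse (up to isomorphism) passages between tangent structures on $\cm$ and strong monoidal functors $F\colon \peil\ra\super(\cm)$ satisfying (1) and (2). The conceptual content is that $\peil$ is presented by the generating maps of \emph{Figure 1} (together with the product projections) subject to relations that are exactly the tangent-structure axioms, so that a tangent structure on $\cm$ is the same thing as an interpretation of those generators in $\super(\cm)$ respecting the relations, the products-as-pullbacks, and the coproduct.

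First I would build $F$ from a tangent structure $\mathbb{T}=(T,p,\eta,+,l,c)$. On objects, set $F(W)=T$ and $F(W^n)=T^{(n)}$, extending to all of $\peil$ by sending $\otimes$ to composition; this is forced, since every object of $\peil$ is obtained from the $W^n$ by iterated tensor, and the coherence required for strong monoidality rests on the fact that the composites $T^m$ preserve the pullback powers $T^{(n)}$. On morphisms, send the generating maps $\epsilon_W,\eta_W,+,l,c$ to the corresponding components $p,\eta,+,l,c$ of $\mathbb{T}$ and a product projection to the evident projection natural transformation, then extend to an arbitrary map by its canonical decomposition from Steps 1--6 of Section \ref{construction}, using functoriality, the monoidal structure, and the universal property of the foundational pullbacks.

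The main obstacle is \emph{well-definedness} of $F$ on morphisms. A map of $\peil$ decomposes into generators in many ways, so I must check that the fixed choices of Section \ref{choices} are inessential and, crucially, that composition is respected, $F(g\circ f)=F(g)\circ F(f)$. For the latter I would invoke the map $\Omega$ of Section \ref{omega}, which is precisely the comparison between the chosen decomposition of $h=g\circ f$ and those of $g$ and $f$; functoriality of $F$ reduces to commutativity of the $\Omega$-square, which upon unwinding is exactly the coherence axioms of $\mathbb{T}$ (the relations $c^2=id$, $c\circ l=l$, the hexagons, and the additive-bundle identities). In other words, one verifies that every relation holding among the generators in $\peil$ is a consequence of the tangent-structure axioms, so any axiom-respecting interpretation extends uniquely. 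Conditions (1) and (2) then hold by construction: (1) is the clause in the definition of tangent structure that the composites $T^m$ preserve the pullback powers, and (2) is verbatim the universality of the vertical lift.

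Conversely, from an $F$ satisfying (1) and (2) I would recover a tangent structure by setting $T=F(W)$, $p=F(\epsilon_W)$, $\eta=F(\eta_W)$, $+=F(+)$, $l=F(l)$, and $c=F(c)$, where $F(k)\cong 1_{\cm}$ comes from strong monoidality. Condition (1) guarantees that $F(W^n)$ is genuinely the $n$th pullback power $T^{(n)}$ (take $B=k$) and that the composites $T^m$ preserve these pullbacks (take $B$ a tensor power of $W$); each remaining tangent-structure axiom is then obtained by applying $F$, via functoriality and the monoidal comparison isomorphisms, to the corresponding identity among Weil-algebra maps, which is checked directly on generators, while condition (2) supplies the universality of the vertical lift. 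Finally I would confirm that the two passages are mutually inverse up to isomorphism: both agree on $W$ and on the generating maps, and since every object and morphism of $\peil$ is built from these data, each round trip returns the original tangent structure, respectively a functor naturally isomorphic to the original $F$.
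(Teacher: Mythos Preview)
Your proposal is correct and follows essentially the same strategy as the paper: extract a tangent structure from $F$ by evaluating at the generating maps, and conversely build $F$ from $\mathbb{T}$ by assigning generators and extending via the canonical decompositions of Section~\ref{construction}, with $\Omega$ mediating the comparison of decompositions needed for functoriality.

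The paper's execution is somewhat more granular than your sketch suggests. Rather than a single ``$\Omega$-square'' reduction, the proof of $F(g\circ f)=Fg\circ Ff$ proceeds through a chain of five reductions (\S\ref{decompC}--\S\ref{decompA}): first reduce the codomain $C$ to $nW$ using the foundational pullbacks; then use $\Omega$ to strip intersecting circles from $g$; then reduce the middle object $B$ to $mW$; then introduce a \emph{second} comparison map $\Gamma$ (dual in spirit to $\Omega$) to strip intersecting circles from $f$ and $h$; and only then arrive at the base case where the coherence axioms for $l$, $c$, and the coefficient maps $\hat g_r$ apply directly. The paper also separates out the well-definedness issue (that the required pullback square in $\cd$ actually commutes so that $Ff$ can be induced) as a distinct inductive argument on the number of vertices of $G_B$, carried out after functoriality has been established on the maps for which $F$ is already defined. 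Your proposal gestures at both of these points but compresses them; the underlying approach, however, is the same.
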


\begin{proof}
Given such a functor $F$, the corresponding tangent structure is given as
\begin{displaymath}
\mathbb{T}=(FW,F\epsilon_W,F\eta_W,F+,Fl,Fc)
\end{displaymath}
and it can be readily checked that this satisfies all the necessary conditions to be a tangent structure.

We now prove that this process is bijective in a suitable up-to-isomorphism sense. Suppose we have a tangent structure $\mathbb{T}$ and wish to define a functor $F\colon \peil\ra \textnormal{End}(\cm)$.

Since $F$ is required to send the unit Weil algebra $k$ to $1_\cm$ and further, since $k$ is a zero object in \peil, it is convenient to consider the category $1_\cm/\super(\cm)/1_\cm$ (which we shall call $\cd$ for convenience). Explicitly, it has:
\vspace{-0.2cm}
\begin{quote}
\item[$\bullet$]\textit{Objects:} Triples $(R,\eta_R,\epsilon_R)$ of the form
\begin{equation*}
  \xymatrix{
1_M \ar[r]^{\eta_R} & R \ar[r]^{\epsilon_R} & 1_M
}
\end{equation*}

(a diagram in End$(\cm)$) subject to $\epsilon_R \circ\eta_R=id$.

\item[$\bullet$] \textit{Morphisms:} A morphism $\phi\colon (R,\eta_R,\epsilon_R)\ra (S,\eta_S,\epsilon_S)$ is a morphism 
\begin{displaymath}
\phi\colon R\ra S
\end{displaymath}
of End$(\cm)$ such that the triangles in
\begin{equation*}
  \xymatrix{
1_M \ar[d]_{\eta_R} \ar[dr]^{\eta_S} & \\
R \ar[r]^{\phi} \ar[d]_{\epsilon_R} & S \ar[dl]^{\epsilon_S} \\
1_M
}
\end{equation*}
commute.

Composition and identities are defined in an obvious manner. Where obvious, we will refer to a triple $(R,\eta_R,\epsilon_R)$ simply as $R$.
\end{quote}

In particular, note the following:
\vspace{-0.2cm}
\begin{quote}
\item[$\bullet$] $1_M$ is a zero object
\item[$\bullet$] There is an obvious forgetful functor $I\colon \cd\ra \textnormal{End}(\cm)$
\item[$\bullet$] There is a canonical monoidal operation on $\cd$ induced by composition (which we shall also call $\otimes$ for convenience) with unit $1_M$
\item[$\bullet$] Given objects $R$ and $S$ of \cd, if the pullback of $\epsilon_R$ and $\epsilon_S$ exists in End$(\cm)$ then this pullback will be the product of $R$ and $S$ in \cd.
\end{quote}

Further, if the category \cm is equipped with a tangent structure $\mathbb{T}$, then the triple $(T,\eta,p)$ is an object of \cd, as are $T^n$ and $T^{(m)}$, and it can shown that the morphisms $+,l,c$ are also morphisms of \cd.

Thus, to define our desired functor $F\colon \peil\ra \textnormal{End}(\cm)$, it suffices to define a corresponding functor $F'\colon \peil\ra \cd$ and then post-compose with the forgetful functor $I$, and every such functor $\peil\ra\textnormal{End}(\cm)$ will be of this form, As such, we shall refer to $F$ and $F'$ interchangeably.

We begin by giving `assignations' for $F$ on objects and morphisms of \peil, then we shall later prove functoriality of this assignation.

Now, we have $k\mapsto 1_\cm$ (preservation of unit) and define $W\mapsto T$, and thus have $F(\epsilon_W)=p$.

Additionally, with a slight abuse of notation, we will also ask that $(\eta_W,+,l,c)$ in \peil are assigned to $(\eta,+,l,c)$ of $\mathbb{T}$. 

We also define $F$ on objects recursively: given $A$ and $B$ in \peil with $F(A)=R$ and $F(B)=S$, then $F(A\otimes B)=R\otimes S$ and $F(A\times B)=R\times S$. Thus $F$ by construction will preserve both $\otimes$ (as compositions) and $\times$ (products of \peil), since every object of \peil has a canonical decomposition.

We also assign the projection maps for pullbacks in \peil to the corresponding projections in \cd.

Moreover, note that we have (up to isomorphism)
\begin{align*}
nW&\mapsto T^n\\
W^m&\mapsto T^{(m)}
\end{align*}

With the assignation of the generating maps in \peil to those of $\bt$ as well as the assignation of the objects in mind, we naively would like to use the ideas detailed in \ref{construction} to recursively define the action of $F$ on all other maps of \peil. However, at this stage, this presents the potential problem of whether this is well defined. Explicitly, given a map $f\colon A\ra B$ of \peil, if $B\cong C\otimes (B_1\times B_2)$ (in a non-trivial way), then we have a diagram
\small{
\begin{equation*}
  \xymatrix{
A \ar@/^12pt/[rrd]^{f_1=(C\otimes\pi_1)\circ f} \ar@{-->}[rd]^f \ar@/_12pt/[rdd]_{f_2=(C\otimes\pi_2)\circ f} & & \\
& \ar @{} [dr] | \pbc B\cong C\otimes (B_1\times B_2) \ar[r]^-{C\otimes\pi_1} \ar[d]_{C\otimes\pi_2} & C\otimes B_1 \ar[d]^{C\otimes\epsilon_{B_1}} \\
& C\otimes B_2 \ar[r]_-{C\otimes\epsilon_{B_2}} & C
}
\end{equation*}
}
in \peil, and we we require that the corresponding diagram (with a slight abuse of notation)
\begin{equation}\label{pull}
  \xymatrix@C+1.5em@R+1.5em{
FA \ar[r]^{Ff_1} \ar[d]_{Ff_2} & FC\otimes FB_1 \ar[d]^{FC\otimes\epsilon_{FB_1}} \\
FC\otimes FB_2 \ar[r]_{FC\otimes\epsilon_{FB_2}} & FC
}
\end{equation}
commutes in \cd. We will need this if we are to use the universal property of the pullback
\begin{equation*}
  \xymatrix@C+1.5em@R+1.5em{
\ar @{} [dr] | \pbc FB \ar[r]^{\pi_1} \ar[d]_{\pi_2} & FC\otimes FB_1 \ar[d]^{FC\otimes\epsilon_{FB_1}} \\
FC\otimes FB_2 \ar[r]_{FC\otimes\epsilon_{FB_2}} & FC
}
\end{equation*}
to induce $Ff\colon FA\ra FB$.

So, we will begin by using \ref{construction} to define the action of $F$ on as many maps as possible (in the sense of \ref{choices}). We will now prove that $F$ is functorial at least for these well-defined assignations. Explicitly, we will show that for morphisms $f,g,h$ in \peil with $Ff,Fg,Fh$ well defined, if $h=g\circ f$ in \peil then $Fh=Fg\circ Ff$ in $\cd$.

More specifically, suppose we have
\begin{equation*}
  \xymatrix@C+2em@R+2em{
A \ar[r]^{f} \ar@/_12pt/[rr]_{h=g\circ f} & B \ar[r]^{g} & C
}
\end{equation*}
in \peil. We wish to show that 
\begin{equation*}
  \xymatrix@C+2em@R+2em{
FA \ar[r]^{Ff} \ar@/_12pt/[rr]_{Fh} & FB \ar[r]^{Fg} & FC
}
\end{equation*}
commutes in \cd.

\subsubsection{Reducing $C$ to $nW$}\label{decompC}\textcolor{white}{\small{all work and no play makes jack a dull boy}}

If the Weil algebra $C$ has any edges in its corresponding graph, then recall from \ref{mapsAB} that $C$ can be written as $C=C'\otimes (C_1\times C_2)$ and thus we have the pullback
\begin{equation*}
  \xymatrix@C+1em@R+1em{
\ar @{} [dr] | \pbc C'\otimes (C_1\times C_2) \ar[r]^-{C'\otimes\pi_1} \ar[d]_{C'\otimes\pi_2} & C'\otimes C_1 \ar[d]^{C'\otimes\epsilon_1} \\
C'\otimes C_2 \ar[r]_-{C'\otimes\epsilon_2} & C'
}
\end{equation*}
and since $F$ is to preserve such pullbacks, then $FC$ must also be a pullback of this form.

So in \cd we have
\begin{equation*}
  \xymatrix{
& & & FC_1 \\
FA \ar[r]^{Ff} \ar@/_12pt/[rr]_{Fh} & FB \ar[r]^{Fg} & FC \ar[ur]^{\pi_1} \ar[dr]_{\pi_2} \\
& & & FC_2
}
\end{equation*}

But recall that $Fg$ was induced by the maps $F(\pi_1\circ g)$ and $F(\pi_2\circ g)$ (and similarly for $Fh$). It then suffices to show that $F(\pi_i\circ h)=F(\pi_i\circ g)\circ Ff$. Repeating this argument, it suffices to assume $C=nW$ (correspondingly that $FC=T^n$).

\subsubsection{Reducing $g\colon B\ra nW$ to have no intersecting circles}\textcolor{white}{\hspace{1cm}}

Consider now the map $g\colon B\ra nW$. Recall that from \ref{multiple circles}, $g$ factorises as the composite
\begin{equation*}
  \xymatrix{
& W^{\beta_1}\otimes\dots \otimes W^{\beta_n} \ar[d]^{+_{\beta}} \\
B \ar[r]_{g} \ar[ur]^{g'} & nW
}
\end{equation*}

Thus, $Fg$ is constructed as the corresponding composite
\begin{equation*}
  \xymatrix{
& T^{(\beta_1)}\otimes\dots \otimes T^{(\beta_n)} \ar[d]^{+_\beta} \\
FB \ar[r]_{Fg} \ar[ur]^{Fg'} & T^n
}
\end{equation*}
noting that since $+\colon T^{(2)}\ra T$ is an associative, commutative and unital map, then there is a well defined map $+_{\beta_i}\colon T^{(\beta_i)}\ra T$ for each $i$, and finally $+_\beta =+_{\beta_1}\otimes\dots\otimes +_{\beta_n}$. Similarly, the map $h$ is constructed as an appropriate composite
\begin{equation*}
  \xymatrix{
& T^{(\alpha_1)}\otimes\dots \otimes T^{(\alpha_n)} \ar[d]^{+_\alpha} \\
FA \ar[r]_{Fh} \ar[ur]^{Fh'} & T^n
}
\end{equation*}

We now have the following diagram
\begin{equation}\label{complicated diagram}
\xymatrix{
&& FB \ar[d]^{Fg'} \ar@/^16pt/[ddrr]^-{Fg} && \\
&& T^{(\beta_1)}\otimes\dots \otimes T^{(\beta_n)} \ar[drr]^-{+_\beta} && \\
FA \ar[rr]^-{Fh'} \ar[urr]^-{F(g'\circ f)} \ar@/^16pt/[uurr]^-{Ff} \ar@/_14pt/[rrrr]_-{Fh} && T^{(\alpha_1)}\otimes\dots \otimes T^{(\alpha_n)} \ar[rr]^-{+_\alpha} && T^n
}
\end{equation}
for which we wish to show the commutativity of the exterior. We already know the bottom triangle as well as top right triangle commute by construction. We begin with the innermost square
\begin{equation}\label{important square}
\xymatrix{
FA \ar[r]^-{F(g'\circ f)} \ar[d]_{Fh'} & T^{(\beta_1)}\otimes\dots \otimes T^{(\beta_n)} \ar[d]^{+_\beta} \\
T^{(\alpha_1)}\otimes\dots \otimes T^{(\alpha_n)} \ar@{-->}[ur]_\Omega \ar[r]_-{+_\alpha} & FB
}
\end{equation}
and and introduce the map $\Omega$ as defined in \ref{omega}. Recall that
\begin{displaymath}
\Omega\colon W^{\alpha_1}\otimes\dots\otimes W^{\alpha_n}\ra W^{\beta_1}\otimes\dots\otimes W^{\beta_n}
\end{displaymath}
assigned each generator of the domain to a particular generator in the codomain, and moreover we have $\Omega=\Omega_1\otimes\dots\otimes\Omega_n$ for $\Omega_i\colon W^{\alpha_i}\ra W^{\beta_i}$.

First, it now becomes rather routine to show that $+_\alpha=+_\beta\circ\Omega$ (i.e. the lower triangle in (\ref{important square})). To show $F(g'\circ f)=\Omega\circ Fh'$ in \cd, note first that $g'\circ f=\Omega\circ h'$ in \peil by construction. So equivalently, we can show that
\begin{equation*}
\xymatrix{
FA \ar[r]^-{Fh'} \ar@/_14pt/[rr]_{F(\Omega\circ h')} & T^{(\alpha_1)}\otimes\dots \otimes T^{(\alpha_n)} \ar[r]^\Omega & T^{(\beta_1)}\otimes\dots \otimes T^{(\beta_n)}
}
\end{equation*}
commutes.

But recall from \ref{omega} that $T^{(\beta_1)}\otimes\dots \otimes T^{(\beta_n)}$ is a limit (constructed as iterations of foundational pullbacks, hence preserved by $F$) with projections $(r_1,\dots ,r_n)$. As such, it suffices to show the commutativity of
\begin{equation*}
\xymatrix{
FA \ar[r]^-{Fh'} \ar@/_14pt/[rrr]_{F\big{(}(r_1,\dots,r_n)\circ\Omega\circ h'\big{)}} & T^{(\alpha_1)}\otimes\dots \otimes T^{(\alpha_n)} \ar[rr]^-{(r_1,\dots,r_n)\circ\Omega} & & T^n
}
\end{equation*}
for each $(r_1,\dots,r_n)$. But noting that $\Omega=\Omega_1\otimes\dots\otimes\Omega_n$ and $(r_1,\dots,r_n)=\pi_{r_1}\otimes\dots\otimes\pi_{r_n}$, and noting the form of each $\pi_{r_i}\circ\Omega_i\colon T^{(\alpha_1)}\otimes\dots \otimes T^{(\alpha_n)} \ra T$, then the commutativity of the upper triangle in (\ref{important square}) above is immediate.

Hence, all that remains is to show the commutativity of the upper left triangle of (\ref{complicated diagram}), namely the commutativity of
\begin{equation*}
\xymatrix{
& & FB \ar[d]^{Fg'} \\
FA \ar[urr]^{Ff} \ar[rr]_-{F(g'\circ f)} & & T^{(\beta_1)}\otimes\dots \otimes T^{(\beta_n)}
}
\end{equation*}

Again, since $T^{(\beta_1)}\otimes\dots \otimes T^{(\beta_n)}$ is a limit, it suffices to show the commutativity of
\begin{equation*}
\xymatrix{
& & & FB \ar[d]^{(r_1,\dots,r_n)\circ Fg'=F\big{(}(r_1,\dots,r_n)\circ g'\big{)}} \\
FA \ar[urrr]^{Ff} \ar[rrr]_-{F\big{(}(r_1,\dots,r_n)\circ g'\circ f\big{)}} & & & T^n
}
\end{equation*}
for each projection $(r_1,\dots,r_n)$. Finally, note that by construction, each map
\begin{displaymath}
(r_1,\dots,r_n)\circ g'\colon B\ra nW
\end{displaymath}
has no intersecting circles. Thus, it suffices to show the commutativity of 
\begin{equation*}
  \xymatrix{
FA \ar@/_12pt/[rr]_{Fh} \ar[r]^{Ff} & FB \ar[r]^{Fg} & T^n
}
\end{equation*}
for the case where $g$ has no intersecting circles.

\subsubsection{Reducing $B$ to $mW$} \textcolor{white}{\small{all work and no play makes jack a dull boy}}

Recall from \ref{mapsAB} that if the graph $G_B$ contains edges, then it can be expressed in the form $B'\otimes (B_1\times B_2)$, and is thus part of a pullback diagram
\begin{equation*}
  \xymatrix{
\ar @{} [dr] | \pbc B=B'\otimes (B_1\times B_2) \ar[r]^-{B'\otimes\pi_1} \ar[d]_{B'\otimes\pi_2} & B'\otimes B_1 \ar[d]^{B'\otimes\epsilon_{B_1}} \\
B'\otimes B_2 \ar[r]_-{B'\otimes \epsilon_{B_2}}  & B'
}
.
\end{equation*}

Recall also that, from \ref{AnW}, $g:B\ra nW$ having no intersecting circles implies, if $B$ has any edges, that $g$ must factorise through one of the pullback projections (and $Fg$ by definition also factorises through the corresponding projection).

Let $g$ factorise through $B'\otimes \pi_1$. Then we have
\begin{equation*}
  \xymatrix{
& & FB'\otimes FB_1 \ar[d]^{Fg'} \\
FA \ar[r]^{Ff} \ar@/_12pt/[rr]_{Fh} & FB \ar[r]^{Fg} \ar[ur]^{\pi_1} & T^n
},
\end{equation*}
and noting that since $B$ is a pullback of the type that $F$ preserves, then $FB$ is also part of a corresponding pullback.

Now, since $\pi_1\circ Ff=F(\pi_1\circ f)$ by definition, and since $g'$ has no intersecting circles ($\{U\}_{g'}$ has the same circles as $\{U\}_g$, factoring through one of the projections only removes ``unused" vertices of $B$), then we can repeat this argument until $B$ no longer has any edges.

It thus suffices to consider the commutativity of
\begin{equation*}
  \xymatrix@C+2em@R+2em{
FA \ar[r]^{Ff} \ar@/_12pt/[rr]_{Fh} & T^m \ar[r]^{Fg} & T^n
}
\end{equation*}
where $g$ has no intersecting circles.

\subsubsection{Removing the intersecting circles of $f$ and $h$}\textcolor{white}{\hspace{1cm}}

Let $\{y_1,\dots,y_m\}$ denote the generators of $mW$ and $\{z_1,\dots,z_n\}$ denote the generators of $nW$. So, in \peil\ we have $g\colon mW\ra nW$, and $\{U\}_g$ has no intersecting circles. Then, modulo some $c$'s to relabel generators and omitting the coefficient maps described in \ref{coefficients}, we have $g=g_1\otimes \dots\otimes g_m\otimes\eta_{sW}$, where each $g_i\colon W\ra s_iW$ is is uniquely determined by the value of $s_i$;
\begin{align*}
s_i=0~&\Rightarrow~g_i=\epsilon_W\colon W\ra k \\
s_i=1~&\Rightarrow~g_i=id_W\colon W\ra W \\
s_i=2~&\Rightarrow~g_i=l\colon W\ra 2W \\
s_i=3~&\Rightarrow~g_i=(W\otimes l)\circ l\colon W\ra 3W
\end{align*}
and so forth, with the $s$ and $s_i$ subject to
\begin{displaymath}
s+\sum\limits^{m}_{i=1}s_i=n
\end{displaymath}

Without loss of generality, we can let $s=0$ and $s_i>0~\forall~i$. This means that in \peil , no generator $y_i$ of $mW$ is sent to zero and every generator $z_j$ of $nW$ belongs to exactly one of the $m$ circles of $\{U\}_g$. This then defines a surjective function
\begin{displaymath}
\psi\colon \{z_1,\dots,z_n\}\ra\{ y_1,\dots,y_m\}.
\end{displaymath}
Without loss of generality, suppose that $\psi(z_1)=y_1$.

Suppose the maps $f$ and $h$ factorise as the composites
\begin{equation*}
\xymatrix{
& W^{\alpha_1}\otimes\dots \otimes W^{\alpha_n} \ar[d]^{+_\alpha} & & W^{\gamma_1}\otimes\dots \otimes W^{\gamma_nm} \ar[d]^{+_\gamma} \\
A \ar[r]_h \ar[ur]^-{h'} & nW & A \ar[r]_f \ar[ur]^-{f'} & mW
}
\end{equation*}
Firstly, this means that for the map $h$, there are precisely $\alpha_1$ circles (say $U_1,\dots,U_{\alpha_1}$) containing the generator $z_1$. But given what we've established about $g$, and noting that $h=g\circ f$, then the $z_1$ term in each of these $U_i$ must arise as a result of the generator $y_1$ (since $\psi(z_1)=y_1$). More explicitly, to each circle $U_i$ of $h$ containing $z_1$ we can associate a unique circle of $f$ containing $y_1$.

Conversely, for each circle $V_j$ of $f$ containing $y_1$, we have $g(V_j)\neq 0$ (moreover, $g(V_j)$ is a single circle) and $z_1\in g(V_j)$. Therefore the number of circles of $f$ containing $y_1$ (namely $\gamma_1$) is the same as the number of circles of $h$ containing $z_1$ (namely $\alpha_1$). Thus, if $\phi(z_i)=y_j$, then $\alpha_i=\gamma_j$.

We then define a map
\begin{displaymath}
\Gamma\colon W^{\gamma_1}\otimes\dots \otimes W^{\gamma_m}\ra W^{\alpha_1}\otimes\dots \otimes W^{\alpha_n}
\end{displaymath}
given as
\begin{equation}\label{Gamma}
\xymatrix{
& mW \ar[rr]^g & & nW \\
W^{\gamma_1}\otimes\dots \otimes W^{\gamma_m} \ar[ur]^{t} \ar@{-->}[rr]_-{\exists !\Gamma} & & W^{\alpha_1}\otimes\dots \otimes W^{\alpha_n} \ar[ur]_-{(r_1,\dots,r_n)}~, &
}
\end{equation}
where for fixed $(r_1,\dots,r_n)$, $(t_1,\dots,t_m)$ is determined as follows:
\begin{itemize}
\item[$\bullet$] Consider $(r_1,\dots,r_n)\circ h'\colon A\ra nW$. If this is the zero map, then $t$ is also the zero map.
\item[$\bullet$] If not, this means that there is at least one circle $U$ of $h$ (and hence $h'$) with each of its generators preserved by $(r_1,\dots,r_n)$. Moreover, if there are multiple circles, then they must be disjoint and each corresponds to a different generator of $A$ (see \ref{aAnW}). Without loss of generality, assume there is only one such circle $U$. Regard $U$ as a subset of $\{z_1,\dots,z_n\}$. Then we know $\psi(U)$ (the image of $U$ under $\psi$) is the unique circle of $f$ corresponding to $U$. Choose $t$ in such a way (moreover, in the unique way) that it preserves this circle $\psi(U)$ of $f$, but sends any $y_j\notin \psi(U)$ to 0.
\end{itemize}

Now, it is fairly routine (albeit tedious) to show that $F\Gamma$ (which we shall also call $\Gamma$ for simplicity) is well defined in \cd, and further, that
\begin{equation*}
\xymatrix{
W^{\gamma_1}\otimes\dots \otimes W^{\gamma_m} \ar[rr]^-{\Gamma} \ar[d]_{+_\gamma} & & W^{\alpha_1}\otimes\dots \otimes W^{\alpha_n} \ar[d]^{+_\alpha} \\
mW \ar[rr]_g & & nW
}
\end{equation*}
commutes in \peil (and that the corresponding diagram commutes in \cd). We now have the following diagram
\begin{equation*}
\xymatrix{
& & T^m \ar@/^14pt/[rrdd]^{Fg} & & \\
& & T^{(\gamma_1)}\otimes\dots\otimes T^{(\gamma_m)} \ar[u]_{+_\gamma} \ar[d]^\Gamma & & \\
FA \ar[rr]^-{Fh'} \ar@/^14pt/[uurr]^{Ff} \ar[urr]^-{Ff'} \ar@/_14pt/[rrrr]_{Fh} & & T^{(\alpha_1)}\otimes\dots\otimes T^{(\alpha_n)} \ar[rr]^-{+_\alpha} & & T^n~,
}
\end{equation*}
and note that to show the exterior commutes, it suffices to show that
\begin{equation*}
\xymatrix{
& T^{(\gamma_1)}\otimes\dots\otimes T^{(\gamma_m)} \ar[d]^\Gamma \\
FA \ar[ur]^{Ff'} \ar[r]_-{Fh'} & T^{(\alpha_1)}\otimes\dots\otimes T^{(\alpha_n)}
}
\end{equation*}
commutes. As $T^{(\alpha_1)}\otimes\dots\otimes T^{(\alpha_n)}$ is a limit with projections $(r_1,\dots,r_n)$, and since $\Gamma$ was given using (\ref{Gamma}), then it suffices to show that
\begin{displaymath}
(r_1,\dots,r_n)\circ\Gamma\circ Ff'=(r_1,\dots,r_n)\circ Fh'
\end{displaymath}
for all $(r_1,\dots,r_n)$. We then have the diagram
\begin{equation*}
\xymatrix@C+1em@R+1em{
FA \ar@/_20pt/[ddrrrr]_-{F\big{(}(r_1,\dots,r_n)\circ h'\big{)}} \ar[drr]^-{F(t\circ f')} \ar[dd]_{Fh'} \ar[rrrr]^-{Ff'} & & & & T^{(\gamma_1)}\otimes\dots\otimes T^{(\gamma_m)} \ar[d]^\Gamma \ar[dll]_{Ft} \\
& & T^m \ar[drr]^{Fg} & & T^{(\alpha_1)}\otimes\dots\otimes T^{(\alpha_n)} \ar[d]^{(r_1,\dots,r_n)} \\
T^{(\alpha_1)}\otimes\dots\otimes T^{(\alpha_n)} \ar[rrrr]_-{(r_1,\dots,r_n)} & & & & T^n
}
\end{equation*}
and to show the exterior commutes, we first note that the lower left triangle and right square commute by construction, and further that it is routine to check that the top triangle commutes. So all that remains is to verify the commutativity of the innermost triangle. But note that $t\circ f'$ by construction has no intersecting circles.

It thus suffices to consider the commutativity of
\begin{equation*}
  \xymatrix@C+2em@R+2em{
FA \ar[r]^{Ff} \ar@/_12pt/[rr]_{Fh} & T^m \ar[r]^{Fg} & T^n
}
\end{equation*}
where $g$ has no intersecting circles and sends no generator $y_i$ to zero, and $f$ also has no intersecting circles in \peil\ (nor does $h$).

\subsubsection{Decomposition of the Weil algebra $A$}\label{decompA}\textcolor{white}{\small{all work and no play makes jack a dull boy}}

We again inductively use the argument that as $f$ (and $h$) have no intersecting circles, it must factor through one of the projections of $A$, and thus it will suffice to take $A=T^\nu$, and thus $f$ has the form (again modulo some $c$'s and omitting coefficient maps of \ref{coefficients}) $f=f_1\otimes \dots\otimes f_\nu\otimes \eta_{rW}$ where each $f_i\colon W\ra r_iW$ is uniquely determined by the value of $r_i$ in an identical manner to that of $g=g_1\otimes \dots\otimes g_m\otimes \eta_{sW}$ from before.

It thus suffices to consider the commutativity of
\begin{equation*}
  \xymatrix@C+2em@R+2em{
T^\nu \ar[r]^{Ff} \ar@/_12pt/[rr]_{Fh} & T^m \ar[r]^{Fg} & T^n
}
\end{equation*}
where $g$ sends no generator $y_i$ of $mW$ to zero, and neither $f$ nor $g$ have any intersecting circles.

Now, it can be shown that the diagram
\begin{equation*}
  \xymatrix@C+2em@R+2em{
T \ar[r]^{F\hat{g}_c} \ar[d]_{F\hat{g}_{cd}} & T \ar[r]^l & T^2 \ar[d]^{F\hat{g}_d\otimes id} \\
T \ar[rr]_l & & T^2
}
\end{equation*}
commutes, where $\hat{g}_c\colon W\ra W$ and $\hat{g}_d\colon W\ra W$ are the coefficient maps detailed in \ref{coefficients}. Coupled with the commutativity of the diagram
\begin{equation*}
  \xymatrix@C+2em@R+2em{
T \ar[r]^l \ar[d]_l & T^2 \ar[d]^{l\otimes id} \\
T^2 \ar[r]_{id\otimes l} & T^3
}
\end{equation*}
then commutativity is now trivial.

So, we have shown that $Fh=Fg\circ Ff$ whenever $h=g\circ f$ and $F$ is well defined on each of $f,g,h$. It remains to show that $F$ is well defined on all maps of \peil.

\subsubsection{The Problem with Pullbacks} \textcolor{white}{\small{all work and no play makes jack a dull boy}}

As mentioned earlier, there may exist maps $f\colon A\ra B$ for which $Ff$ is not well defined, i.e. for $f_1,f_2$ below
\begin{equation*}
  \xymatrix{
A \ar@{-->}[dr]^f \ar@/_15pt/[rdd]_{f_2=\pi_2\circ f} \ar@/^15pt/[rrd]^{f_1=\pi_1\circ f} & & \\
& \ar @{} [dr] | \pbc B=(B_1\times B_2)\otimes C \ar[r]_-{\pi_1} \ar[d]^{\pi_2} & B_1\otimes C \ar[d]^{\epsilon\otimes C} \\
& B_2\otimes C \ar[r]_{\epsilon_2\otimes C} & C
}
\end{equation*}
the square
\begin{equation*}
\xymatrix{
FA \ar[r]^{Ff_1} \ar[d]_{Ff_2} & FB_1\otimes FC \ar[d]^{\epsilon_1\otimes FC} \\
FB_2\otimes C \ar[r]_{\epsilon_2\otimes FC} & FC
}
\end{equation*}
does not commute. We will now show that this cannot happen.

\begin{proposition}
$F$ is well defined for all maps $f\colon A\ra B$ in \peil.
\end{proposition}

\begin{proof}
Let $X$ be the set of all maps $f\colon A\ra B$ for which $Ff$ is not well defined. Suppose further that $X$ is non-empty. Then, to each $f\in X$, let $n(f)$ be the number of vertices in the cograph $G_B$. Finally, let $N(X)=\{ n(f)~|~\forall f\in X\}$.

Since $N(X)$ is a non-empty subset of $\bn$, then by the well ordering principle, it has a least element. Choose a map $f\colon A\ra B$ corresponding to this least element. Further, suppose that the cograph for this codomain has at least one edge. If $G_B$ has no edges (i.e. $B=nW$, then we construct $Ff$ using \ref{aAnW}).

Then we have the diagram
\begin{equation*}
  \xymatrix{
A \ar@{-->}[dr]^f \ar@/_15pt/[rdd]_{f_2=\pi_2\circ f} \ar@/^15pt/[rrd]^{f_1=\pi_1\circ f} & & \\
& \ar @{} [dr] | \pbc B=(B_1\times B_2)\otimes C \ar[r]_-{\pi_1} \ar[d]^{\pi_2} & B_1\otimes C \ar[d]^{\epsilon\otimes C} \\
& B_2\otimes C \ar[r]_{\epsilon_2\otimes C} & C
}
\end{equation*}
and noting that since $G_B$ has at least one edge, then $B_1\otimes C$ and $B_2\otimes C$ each have strictly fewer vertices in their respective cographs than $G_B$. Thus, by construction, $Ff_1$ and $Ff_2$ are both well defined.

We wish to show the commutativity of
\begin{equation*}
\xymatrix{
FA \ar[r]^-{Ff_1} \ar[d]_{Ff_2} & FB_1\otimes FC \ar[d]^{\epsilon_1\otimes FC} \\
FB_2\otimes C \ar[r]_{\epsilon_2\otimes FC} & FC
}
\end{equation*}
so that $Ff$ can indeed be induced by the corresponding pullback square in $\cd$.

Let $f_3=(\epsilon_1\otimes C)\circ f_1\colon A\ra C$ in \peil, i.e. the composite
\begin{equation*}
\xymatrix{
A \ar[rd]_{f_3} \ar[r]^-{f_1} & B_1\otimes C \ar[d]^{\epsilon_1\otimes C} \\
& C
}.
\end{equation*}

Since $G_C$ has strictly fewer vertices than $G_B$, then $Ff_3$ is also well defined. But using the ideas from \ref{decompC} through to \ref{decompA}, then we have $Ff_3=(\epsilon_1\otimes FC)\circ Ff_1$. Similarly, we have $Ff_3=(\epsilon_2\otimes FC)\circ Ff_2$

But this is precisely the commutativity of
\begin{equation*}
\xymatrix{
FA \ar[r]^-{Ff_1} \ar[d]_{Ff_2} & FB_1\otimes FC \ar[d]^{\epsilon_1\otimes FC} \\
FB_2\otimes C \ar[r]_{\epsilon_2\otimes FC} & FC
}
\end{equation*}

Therefore $Ff$ is well defined. Then our original assumption must be incorrect, i.e. $X$ is an empty set.

Therefore $F$ is well defined on all maps.
\end{proof}

This now means that $F$ is not just an assignation, but rather functorial. As such, we have now shown that given a tangent structure $\bt$ on $\cm$, we can construct a strong monoidal functor $F$ that ``picks out" this tangent structure in a suitable up-to-isomorphism sense.
\end{proof}

\subsection{The Functor $F$ and the universality of \peil}\textcolor{white}{That's all, folks!}

We showed above that to equip a category \cm with a tangent structure $\mathbb{T}$ is equivalent to giving (up to a suitable isomorphism) a strong monoidal functor $F\colon \peil\ra \super(\cm)$ satisfying some extra properties.

As such, \peil becomes an initial tangent structure in the sense that it characterises any tangent structure $\bt$ via this functor $F$.

We also note that this functor $F$ only required that $\super(\cm)$ was a monoidal category (with respect to composition and with unit $1_\cm$) and that certain pullbacks were preserved. As a result, we make the following generalisation.

\begin{definition}\label{broaddef} Let $(\cg,\square,\textbf{1})$ be a monoidal category. Regard the category \peil\ as monoidal with respect to coproduct and having unit $k$. A \textit{tangent structure} \bg in \cg is a strong monoidal functor $F\colon (\peil,\otimes,k)\ra(\cg,\square,\textbf{1})$ satisfying the following conditions:

1) $F$ preserves foundational pullbacks

2) The equaliser
\begin{equation*}
  \xymatrix@C+1em@R+1em{
W^2 \ar[r]^{v} & 2W \ar@<.5ex>[rr]^{W\otimes\epsilon_W} \ar@<-.5ex>[rr]_{\eta_W\circ (\epsilon_W\otimes\epsilon_W)} & & W
}
\end{equation*}
is preserved
\end{definition}

\begin{theorem}
A tangent structure on $\cm$ (in the sense of \textnormal{\textbf{Theorem \ref{bigtheorem}}}) is the same as a tangent structure in \en (in the sense of \textnormal{\textbf{Definition \ref{broaddef}}}).
\end{theorem}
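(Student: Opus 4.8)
The plan is to observe that the statement is, by design, almost immediate once Definition \ref{broaddef} is unwound in the special case $\cg = \super(\cm)$ and then matched against the functorial characterisation of tangent structures supplied by \textbf{Theorem \ref{bigtheorem}}. In other words, the only real work is to check that specialising the abstract codomain $\cg$ back to $\super(\cm)$ recovers precisely the hypotheses appearing on the functorial side of \textbf{Theorem \ref{bigtheorem}}.

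First I would take $\cg = \super(\cm)$ in Definition \ref{broaddef}, with $\square$ the composition of endofunctors and monoidal unit $\textbf{1} = 1_{\cm}$. Unwinding the definition, a tangent structure in $\super(\cm)$ is then exactly a strong monoidal functor $F\colon (\peil, \otimes, k)\ra (\super(\cm), \circ, 1_{\cm})$ satisfying condition (1), that $F$ preserves foundational pullbacks, and condition (2), that $F$ preserves the distinguished equaliser built from $v$. Since $F$ is strong monoidal it sends the unit $k$ to $1_{\cm}$, matching the convention used throughout the proof of \textbf{Theorem \ref{bigtheorem}}.

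Next I would compare this, term by term, with the right-hand side of \textbf{Theorem \ref{bigtheorem}}. There a tangent structure on $\cm$ is characterised as a strong monoidal functor $F\colon \peil \ra \super(\cm)$ --- with $\peil$ monoidal under coproduct and $\super(\cm)$ monoidal under composition --- subject to the identical pair of conditions. Because the two monoidal structures agree and the pullback and equaliser conditions are stated with reference to the same diagrams in $\peil$, the two classes of functors coincide on the nose. Invoking \textbf{Theorem \ref{bigtheorem}}, each such functor is equivalent (up to the relevant isomorphism) to a tangent structure $\mathbb{T}$ on $\cm$, and chaining these identifications yields the asserted equivalence.

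The only point demanding any care --- and the place I would expect a reader to pause --- is confirming that the clauses of Definition \ref{broaddef}, phrased for an arbitrary monoidal $\cg$, genuinely reduce to those of \textbf{Theorem \ref{bigtheorem}} under the substitution $\cg = \super(\cm)$; in particular that ``foundational pullbacks'' and the equaliser condition are interpreted in the same way in both settings. As these are specified by the fixed diagrams in $\peil$ together with the (now common) codomain $\super(\cm)$, no genuine obstacle arises. The real content of the theorem is thus conceptual rather than technical: it records that Definition \ref{broaddef} was engineered so that $\super(\cm)$ is the motivating instance, thereby exhibiting $\peil$ as the ``initial'' tangent structure.
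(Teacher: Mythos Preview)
Your proposal is correct and matches the paper's treatment: the paper states this theorem without proof, evidently regarding it as immediate from the definitions, and your argument makes explicit precisely the definitional unwinding that justifies this---specialising $\cg$ to $\super(\cm)$ and observing that the resulting conditions coincide verbatim with those of Theorem~\ref{bigtheorem}.
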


%


\begin{thebibliography}{1}

\bibitem{cocr} Cockett, R. and Cruttwell, G. {\em Differential structure, tangent structure and Synthetic Differential Geometry}, 2013

\bibitem{cogr} Corneil, D.G., H. Lerchs, and L.Stewart Burlingham. {\em Complement Reducible Graphs}, Discrete Applied Mathematics 3.3, 1981: 163-174. Web.

\bibitem{kock} Kock, A. {\em Synthetic Differential Geometry}, 2$^\textnormal{nd}$ ed., 2006

\bibitem{rosi} Rosick\'y, J. {\em Abstract tangent functors}, Diagrammes, 12, Exp No. 3, 1984

\end{thebibliography}
\end{document}